\newtheorem{theorem}{Theorem}[section]
\newtheorem{definition}[theorem]{Definition}
\newtheorem{lemma}[theorem]{Lemma}
\newtheorem{remark}[theorem]{Remark}
\newenvironment{proof}[1][Proof]{\textbf{#1.} }{\hfill\rule{0.5em}{0.5em}}
{\catcode`\@=11\global\let\AddToReset=\@addtoreset
\AddToReset{equation}{section}

\AddToReset{theorem}{section}

\title{New Regularity Criteria for Navier-Stokes and SQG Equations in Critical Spaces}
\pagestyle{fancy}
\fancyhf{} 

\begin{document}
\author{
	{\bf Yiran Xu\thanks{E-mail address: yrxu20@fudan.edu.cn, Fudan University, 220 Handan Road, Yangpu, Shanghai, 200433, China. }, Ly Kim Ha\thanks{E-mail address: lkha@hcmus.edu.vn, Address 1: University of Science VNU-HCMC, Ho Chi Minh City 700000, Vietnam, Address 2: Vietnam National University, Ho Chi Minh City, Vietnam.},~ Haina Li\thanks{E-mail address: lihaina2000@163.com,
			School of Mathematics and Statistics, Beijing Institute of Technology, Beijing 100081, China.},~Zexi Wang\thanks{E-mail address: wangzexi2000@163.com, Department of Mathematics,
			Beijing Jiaotong University, Beijing 100044, PR China.}}}
\date{}  
\maketitle
\begin{abstract}
	In this paper, we investigate some priori estimates to provide
	the critical regularity criteria for incompressible
	Navier-Stokes equations on $\mathbb{R}^3$ and  super critical surface quasi-geostrophic equations on $\mathbb{R}^2$. Concerning the Navier-Stokes equations, we demonstrate that a Leray-Hopf solution $u$ is regular if $u\in L_T^{\frac{2}{1-\alpha}} \dot{B}^{-\alpha}_{\infty,\infty}(\mathbb{R}^3)$, or $u$ in Lorentz space $ L_T^{p,r}  \dot{B}^{-1+\frac{2}{p}}_{\infty,\infty}(\mathbb{R}^3)$, with $4\leq p\leq r<\infty$. Additionally, an alternative regularity condition is expressed as $u\in L_{T}^{\frac{2}{1-\alpha}}
	\dot{B}^{-\alpha}_{\infty,\infty}(\mathbb{R}^3)+{L_T^\infty\dot{B}^{-1}_{\infty,\infty}}(\mathbb{R}^3)$($\alpha\in(0,1)$), contingent upon a smallness assumption on the norm $L_T^\infty\dot{B}^{-1}_{\infty,\infty}$. For the surface quasi-geostrophic equations, we derive that  a Leray-Hopf weak solution $\theta\in L_T^{\frac{\alpha}{\varepsilon}} \dot{C}^{1-\alpha+\epsilon}(\mathbb{R}^2)$ is smooth for any $\varepsilon$ small enough. Similar to the case of Navier-Stokes equations, we derive regularity criteria in more refined spaces, i.e. Lorentz spaces $L_T^{\frac{\alpha}{\epsilon},r}\dot{C}^{1-\alpha+\epsilon}(\mathbb{R}^2)$ and addition of two critical spaces $L_{T}^{\frac{\alpha}{\epsilon}}\dot{C}^{1-\alpha+\epsilon}(\mathbb{R}^2)+{L_T^\infty\dot{C}^{1-\alpha}(\mathbb{R}^2)}$, with smallness assumption on $L_T^\infty\dot{C}^{1-\alpha}(\mathbb{R}^2)$.
\end{abstract}
\section{Introduction}
\subsection{Regularity criteria for  3D incompressible Navier-Stokes equations}
\qquad
Assume $\mathbb{R}^3$ to be replete
with a viscous incompressible fluid, whose dynamics are governed by the 3D Navier-Stokes equations:
\begin{align}\label{NS}
	\begin{cases}
		\partial_t u-\Delta u+u\cdot \nabla u+\nabla p=0,\\
		\nabla\cdot u=0,\\
		u(t,x)|_{t=0}=u_0(x),
	\end{cases}
\end{align}
where $u$ represents the velocity field, and $p$ the pressure. For the dissipation term, we have chosen a kinematic viscosity value of 1 for simplicity. 

The unresolved quest for the global
existence of smooth solutions to the 3D incompressible Navier-Stokes equations constitutes a profound challenge within the field of fluid mechanics. In the seminal work presented in reference \cite{Leray}, Leray established the global existence of weak solutions to the system (1.1) for all initial data $u_0\in L^2$ satisfying divergence-free conditions. Moreover, the subclass of weak solutions adhering to the energy inequality (2.18) is denoted as Leray-Hopf weak solutions. The question of whether smooth initial data guarantees the regularity of Leray-Hopf weak solutions remains open to the best of our knowledge and is a millennium problem. Despite this, several regularity criteria have been identified.

In light of the principle that universal physical laws should be independent of the underlying units, equation \eqref{NS} remains invariant under natural scaling transformations. If $(u, p)$ is a solution to equation \eqref{NS}, then for any $\lambda > 0$, the scaled solutions
\begin{equation*}
	u_\lambda(t, x) = \lambda u(\lambda^2t, \lambda x), \quad p_\lambda(t, x) = \lambda^2p(\lambda^2t, \lambda x)
\end{equation*}
also satisfy the equation, corresponding to rescaled initial data $u_{0,\lambda}(x) = \lambda u_{0}(\lambda x)$. Such scaling transformations deduces us to consider the scaling invariant space.


In \cite{Fujita}, Fujita and Kato proved that   $u$  is regular if $u$ satisfies 
\begin{align*}
	u\in L^p(0,T; \dot{H}^{\frac{1}{2}+\frac{2}{p}}(\mathbb{R}^3)), \quad \text{with}\quad p\in(2,\infty).
\end{align*}
And the well-known Ladyzhenskaya-Prodi-Serrin criteria \cite{Serrin,Giga,Sohr} guarantee the regularity of $u$ with condition:
\begin{align}\label{P-S}
	u\in L^p(0,T;  L^q(\mathbb{R}^3)), \quad \quad \forall p\in[2,\infty) \quad\text{with}\quad \frac{3}{q}+\frac{2}{p}=1.
\end{align}
In the investigation of Leray-Hopf weak solution regularity, it is crucial to highlight the exclusion of the endpoint case $q = 3$ and $p = \infty$ in \eqref{P-S}. Early advancements by von Wahl \cite{41Wahl} identified the space $C(0, T;L^3)$ as a notable regularity class. Further exploration in \cite{28 Kozono} revealed the regularization properties of $BV(0, T;L^3)$. Subsequent studies \cite{28 Kozono,15Da Veiga} expanded on these findings within $L^\infty(0, T;L^3)$. Kozono's \cite{29 Kozono} contributions replaced $L^3$ with a larger Lorentz space, prompting a broader exploration of such spaces \cite{8Borchers,30Kozono}. Sohr \cite{38 Sohr} extended Serrin's results by introducing Lorentz spaces in both time and space. 

J. Y. Chemin and F. Planchon generalized this to Besov space with negative indices (refer to \cite{Chemin}), that is 
\begin{align*}
	u\in L^\infty(0,T;  \dot{B}^{-1+\frac{3}{p}}_{p,\infty}(\mathbb{R}^3)), \quad\text{with}\quad 3<p<6.
\end{align*}

It is also deserved to be mentioned that  many significant regularity criteria were established in terms of only partial components of the velocity field for Navier-Stokes equation and Euler equation \cite{NewPC1,NewPC2}.  For instance, J. Y. Chemin and P. Zhang \cite{3Chemin,4Chemin}
and B. Han Z. Lei. et al. \cite{Lei} proved the regularity of $u$, if
\begin{align*}
	u^3\in L^p(0,T; \dot{H}^{\frac{1}{2}+\frac{2}{p}}(\mathbb{R}^3)), \quad \text{with}\quad p\in[2,\infty).
\end{align*}
And  in \cite{Ting}, H. Chen
, D. Fang and T. Zhang obtained that
\begin{align*}
	\partial_3u\in L^p(0,T;  L^q(\mathbb{R}^3)), \quad \quad \forall q\in(3/2,6] \quad\text{with}\quad \frac{2}{p}+\frac{3}{q}=2.
\end{align*}

In terms of the chain of critical spaces ($r\geq p$)
\begin{align*}
	L^p(0,T; \dot{H}^{\frac{1}{2}+\frac{2}{p}}(\mathbb{R}^3))\hookrightarrow  L^{p}(0,T;  L^{3+\frac{6}{p-2}}(\mathbb{R}^3))\hookrightarrow  L^p(0,T;  \dot{B}^{-1+\frac{2}{p}}_{\infty,\infty}(\mathbb{R}^3))\hookrightarrow  L^{p,r}(0,T;  \dot{B}^{-1+\frac{2}{p}}_{\infty,\infty}(\mathbb{R}^3)),
\end{align*}
we naturally  consider the regularity of $u$ with condition $u\in L^{\frac{2}{1-\alpha}}(0,T;  \dot{B}^{-\alpha}_{\infty,\infty}(\mathbb{R}^3)) $ or $u\in L^{\frac{2}{1-\alpha},r}(0,T;  \dot{B}^{-\alpha}_{\infty,\infty}(\mathbb{R}^3)) $, where $\alpha\in[0,1]$ and  $r\geq \frac{2}{1-\alpha}.$ In this paper, we develop some prior estimates to show the regularity  criteria of Navier-Stokes equations \eqref{NS}. Now, we state
our main theorems. \\
To simplify our results, we introduce following notations:
\begin{align*}
	L_T^pX=L^p(0,T;X).
\end{align*}
\begin{theorem}\label{LpB}
	Let $u$ be the unique solution of the Navier–Stokes equations \eqref{NS} with initial data $u_0 \in H^1(\mathbb{R}^3)$ and $\nabla\cdot u_0 = 0$. The solution $u$ is regular in $(0, T]$, provided that
	\begin{align}\label{mm}
		u\in L_T^{\frac{2}{1-\alpha}} \dot{B}^{-\alpha}_{\infty,\infty}(\mathbb{R}^3),\quad\alpha\in[0,1).
	\end{align}
	More specifically, we have
	\begin{align}\label{Lt}
		\|\nabla u\|_{L_T^\infty L^2(\mathbb{R}^3)}+\|\nabla^2 u\|_{L_T^2 L^2(\mathbb{R}^3)}\leq \|\nabla u_0\|_{L^2(\mathbb{R}^3)}\exp\left(C\int_{0}^{T}\|u(s)\|_{\dot{B}^{-\alpha}_{\infty,\infty}(\mathbb{R}^3)}^{\frac{2}{1-\alpha}}ds\right).
	\end{align}
\end{theorem}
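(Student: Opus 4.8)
The plan is to derive a differential inequality for the quantity $\|\nabla u(t)\|_{L^2}^2$ via an energy estimate at the level of the first-order derivatives of $u$, and then close the argument with Gr\"onwall's inequality. First I would apply $\nabla$ to the momentum equation (or equivalently multiply the equation by $-\Delta u$ and integrate over $\mathbb{R}^3$), using the divergence-free condition to obtain
\begin{align*}
	\frac{1}{2}\frac{d}{dt}\|\nabla u\|_{L^2}^2+\|\nabla^2 u\|_{L^2}^2 = -\int_{\mathbb{R}^3}(u\cdot\nabla u)\cdot(-\Delta u)\,dx = \int_{\mathbb{R}^3}(\nabla u\cdot\nabla)u\cdot\nabla u\,dx,
\end{align*}
where the pressure term vanishes after integration by parts against the solenoidal field $\Delta u$. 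The whole game is then to bound the trilinear term by $\|u\|_{\dot B^{-\alpha}_{\infty,\infty}}^{2/(1-\alpha)}\|\nabla u\|_{L^2}^2$ plus a small multiple of $\|\nabla^2 u\|_{L^2}^2$ that can be absorbed by the dissipation on the left-hand side.

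The key step is the paraproduct/interpolation estimate for $\int (\nabla u)^3$. I would decompose using Littlewood-Paley projections $\Delta_j$ and split into low and high frequencies at a threshold $N$ to be chosen. The low-frequency part of one factor of $\nabla u$ is controlled in $L^\infty$ by roughly $2^{(1-\alpha)N}\|u\|_{\dot B^{-\alpha}_{\infty,\infty}}$ (gaining the negative-regularity norm at the cost of powers of $2^N$), while the high-frequency part is controlled by $2^{-\alpha N}\|\nabla^2 u\|$-type quantities; the remaining two factors of $\nabla u$ are kept in $L^2$ and interpolated between $\|\nabla u\|_{L^2}$ and $\|\nabla^2 u\|_{L^2}$. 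Choosing $N$ so that $2^N \sim (\|\nabla^2 u\|_{L^2}/\|u\|_{\dot B^{-\alpha}_{\infty,\infty}}\|\nabla u\|_{L^2})^{\text{const}}$ optimizes the split and, after Young's inequality with the right exponents, produces exactly
\begin{align*}
	\Big|\int_{\mathbb{R}^3}(\nabla u\cdot\nabla)u\cdot\nabla u\,dx\Big| \leq \frac{1}{2}\|\nabla^2 u\|_{L^2}^2 + C\|u\|_{\dot B^{-\alpha}_{\infty,\infty}}^{\frac{2}{1-\alpha}}\|\nabla u\|_{L^2}^2.
\end{align*}
The exponent $\frac{2}{1-\alpha}$ is dictated by scaling: $u\in L_T^{2/(1-\alpha)}\dot B^{-\alpha}_{\infty,\infty}$ is the scaling-critical norm, so Young's inequality must be applied with conjugate exponents $\frac{1}{1-\alpha}$ and $\frac{1}{\alpha}$ (or their analogues) to land on it.

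Having established this inequality, I would absorb the $\frac12\|\nabla^2 u\|_{L^2}^2$ term into the left, integrate in time from $0$ to $t$, and apply Gr\"onwall's lemma to the resulting inequality $\frac{d}{dt}\|\nabla u\|_{L^2}^2 + \|\nabla^2 u\|_{L^2}^2 \leq C\|u\|_{\dot B^{-\alpha}_{\infty,\infty}}^{2/(1-\alpha)}\|\nabla u\|_{L^2}^2$, which yields precisely the bound \eqref{Lt}. The hypothesis $u\in L_T^{2/(1-\alpha)}\dot B^{-\alpha}_{\infty,\infty}$ guarantees the time integral in the exponent is finite, so $\|\nabla u\|_{L^\infty_T L^2}$ and $\|\nabla^2 u\|_{L^2_T L^2}$ stay bounded up to time $T$; standard $H^1$-regularity propagation (which for $\mathbb{R}^3$ upgrades an $H^1$ bound to full smoothness on $(0,T]$) then gives regularity. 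I expect the main obstacle to be the frequency-localized trilinear estimate: one must be careful about which of the three factors absorbs the low-frequency $L^\infty$ bound and which two are interpolated in $L^2$, and about verifying that the optimization in $N$ genuinely reproduces the critical exponent rather than a subcritical one — the endpoint $\alpha\to1$ (where $2/(1-\alpha)\to\infty$) is the delicate limiting case that signals why $\alpha=1$ is excluded.
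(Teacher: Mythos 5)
Your proposal follows essentially the same route as the paper: an $\dot H^1$-level energy estimate reducing matters to the trilinear term $\int(\nabla u)^3$, a bound of that term by $\|u\|_{\dot B^{-\alpha}_{\infty,\infty}}\|\nabla u\|_{L^2}^{1-\alpha}\|\nabla^2 u\|_{L^2}^{1+\alpha}$, Young's inequality to absorb the dissipation, and Gr\"onwall. The paper packages the key step as the interpolation inequality $\|\nabla f\|_{L^3}\lesssim\|f\|_{\dot B^{-\alpha}_{\infty,\infty}}^{1/3}\|\nabla f\|_{L^2}^{(1-\alpha)/3}\|\nabla^2 f\|_{L^2}^{(1+\alpha)/3}$ (its Lemma 2.7, proved by Besov interpolation) rather than your inline high--low frequency optimization, and the correct conjugate Young exponents are $\tfrac{2}{1+\alpha}$ and $\tfrac{2}{1-\alpha}$ rather than the ones you named, but these are cosmetic differences.
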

\begin{remark}
	Theorem \ref{LpB} implies that a Leray-Hopf weak solution \(u\) satisfying \eqref{mm} is regular in \((0, T]\). Specifically, if \(u \in L^\infty(0, T; L^2(\mathbb{R}^3)) \cap L^2(0, T; \dot{H}^1(\mathbb{R}^3))\), then for all \(s \in (0, T)\), there exists \(t_0 \in [0, s]\) such that \(u(t_0, \cdot) \in H^1\). By the application of Theorem \ref{LpB}, it follows that \(u\) is regular in \([t_0, T]\). Since \(s\) was arbitrarily chosen, the conclusion holds for the entire interval \((0, T]\).
\end{remark}
\begin{theorem}\label{LprB}
	Let $\alpha\in(\frac{1}{2},1)$ and $u$ be the Leray-Hopf  solution of the Navier–Stokes equations \eqref{NS} with initial data $u_0 \in H^1(\mathbb{R}^3)$ and $\nabla\cdot u_0 = 0$, then for any $\frac{2}{1-\alpha}<r<\infty$,
	\begin{align*}
		\|\nabla u\|_{L^\infty_TL^2(\mathbb{R}^3)}+\|\nabla^2 u\|_{L^2_TL^2 (\mathbb{R}^3)}
		&\lesssim 
		\|\nabla u_0\|_{L^2(\mathbb{R}^3)}
		\left(1+\|\nabla u_0\|^{\frac{3-2\alpha}{2\alpha-1}}_{L^2(\mathbb{R}^3)} \|u_0\|_{L^2(\mathbb{R}^3)} \right)\exp\left(C\|u_0\|^{\frac{4}{3-2\alpha}}_{L^2(\mathbb{R}^3)}\right)\cdot\\
		&\qquad\cdot\exp\left(C\|u\|_{L_T^{\frac{2}{1-\alpha},r}\dot{B}^{-\alpha}_{\infty,\infty}(\mathbb{R}^3)}^{r}\right),
	\end{align*}
	where $C$ is a constant depending on  $\alpha$, $r$.
\end{theorem}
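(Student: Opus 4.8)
The plan is to upgrade the estimate in Theorem \ref{LpB} from the Lebesgue space $L_T^{2/(1-\alpha)}\dot B^{-\alpha}_{\infty,\infty}$ to the Lorentz space $L_T^{2/(1-\alpha),r}\dot B^{-\alpha}_{\infty,\infty}$, at the price of requiring $\alpha\in(\tfrac12,1)$ and paying an extra polynomial/exponential factor in the initial data. The starting point is the differential inequality that must already underlie the proof of Theorem \ref{LpB}: writing $y(t)=\|\nabla u(t)\|_{L^2}^2$, an energy estimate for $\nabla u$ together with the paraproduct/commutator bound for $u\cdot\nabla u$ using $u\in\dot B^{-\alpha}_{\infty,\infty}$ yields something of the form
\begin{align*}
	\frac{d}{dt}y(t)+\|\nabla^2 u(t)\|_{L^2}^2\le C\,\|u(t)\|_{\dot B^{-\alpha}_{\infty,\infty}}^{\frac{2}{1-\alpha}}\,y(t).
\end{align*}
Gronwall then gives $y(t)+\int_0^t\|\nabla^2 u\|_{L^2}^2\le y(0)\exp\big(C\int_0^t\|u\|_{\dot B^{-\alpha}_{\infty,\infty}}^{2/(1-\alpha)}\big)$, which is exactly \eqref{Lt}. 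The whole point now is that $\int_0^T\|u(s)\|_{\dot B^{-\alpha}_{\infty,\infty}}^{2/(1-\alpha)}\,ds$ need not be finite when we only control the \emph{weak}-type (Lorentz) norm $L_T^{2/(1-\alpha),r}$, so a direct application of Gronwall fails and the argument must be localized in time.

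First I would split $[0,T]$ into a bad part and a good part. Set $p=\tfrac{2}{1-\alpha}$ and let $g(t)=\|u(t)\|_{\dot B^{-\alpha}_{\infty,\infty}}$. Since $g\in L^{p,r}_T$ with $p<r<\infty$, by the distributional characterization of Lorentz norms the super-level set $E_\lambda=\{t\in(0,T):g(t)>\lambda\}$ has measure $|E_\lambda|\lesssim \lambda^{-r}\|g\|_{L^{p,r}}^r$, which is small for $\lambda$ large. On the complement, $g(t)\le\lambda$, so $\int g^p\,\mathbf 1_{\{g\le\lambda\}}\le \lambda^{p-r}\int g^r\,\mathbf 1_{\{g\le\lambda\}}$; but controlling $\int g^r$ is not automatic either, so instead I would run the energy estimate on the bad set $E_\lambda$ using its \emph{smallness in measure} rather than integrability of a high power. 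On $E_\lambda$ one has, trivially, $g(t)^p = g(t)^{p-r}g(t)^r$ is still not integrable — the cleaner route is: over any subinterval $I\subset E_\lambda$ whose length is below a threshold $\delta$, interpolate using the dissipation. Concretely, from the Besov embedding and $\|\nabla u\|_{L^2}$, $\|\nabla^2 u\|_{L^2}$ one obtains a bound of Ladyzhenskaya type that lets the nonlinear term on a short interval be absorbed by the dissipation once $\|\nabla u\|_{L^\infty_I L^2}$ and $|I|$ are small; this is where the restriction $\alpha>\tfrac12$ enters, since it makes the relevant exponents subcritical so that short-interval smallness actually closes. The initial-data factor $\|\nabla u_0\|_{L^2}^{(3-2\alpha)/(2\alpha-1)}\|u_0\|_{L^2}\exp(C\|u_0\|_{L^2}^{4/(3-2\alpha)})$ is precisely what one gets from the classical $\dot H^1$ local existence time $T_*\gtrsim \|\nabla u_0\|_{L^2}^{-4(\cdots)}$ combined with the energy bound $\|u\|_{L^\infty_T L^2}\le\|u_0\|_{L^2}$; I expect it to appear when estimating how many short intervals are needed to cover $[0,T]$ and controlling $\|\nabla u\|_{L^2}$ at the start of each.

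The technical heart is therefore a \emph{splitting-and-iteration} argument: choose $\lambda$ so that $|E_\lambda|$ is as small as we please; on the good set $g\le\lambda$ so the Gronwall exponent $\int_{\text{good}}g^p$ is bounded by $\lambda^{p}\,T$ — no, better, bounded directly by $\|g\|_{L^{p,r}}^{p}$ up to a constant depending on $p,r$ via the layer-cake formula restricted to $\{g\le\lambda\}$, which is finite; then decompose $[0,T]$ into finitely many intervals $[t_{k},t_{k+1}]$ each containing a controlled amount of the bad set, propagate the $\dot H^1$ norm across each using the short-time existence estimate with the energy bound supplying $\|u(t_k)\|_{L^2}\le\|u_0\|_{L^2}$, and multiply the resulting factors. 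Summing the exponents over the $O(1)$ many intervals (the number depending only on $\alpha,r,\|u_0\|_{L^2}$ and $\|g\|_{L^{p,r}}$) gives the stated bound, with the final factor $\exp(C\|u\|_{L_T^{p,r}\dot B^{-\alpha}_{\infty,\infty}}^{r})$ coming from the $r$-th power that measures the size of the bad set. The main obstacle I anticipate is making the bad-set estimate quantitative: one cannot simply drop the bad intervals (the solution could blow up there a priori), so one must genuinely show that on a time interval of small measure the $\dot H^1$ norm grows by at most a controlled factor, using the dissipation term $\|\nabla^2u\|_{L^2}^2$ to beat the nonlinearity — and it is exactly the gap $1/(1-\alpha)>2$, i.e. $\alpha>1/2$, that gives room for a Hölder-in-time argument (with the extra integrability exponent $r$) to make that growth factor finite.
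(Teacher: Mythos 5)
Your starting point (estimate \eqref{Lt} from Theorem \ref{LpB}) and your diagnosis of the obstacle (that $\int_0^T\|u(s)\|_{\dot B^{-\alpha}_{\infty,\infty}}^{2/(1-\alpha)}ds$ need not be finite for $u\in L_T^{2/(1-\alpha),r}\dot B^{-\alpha}_{\infty,\infty}$ with $r>\tfrac{2}{1-\alpha}$) are both correct, but the resolution you propose has a genuine gap, and in fact two. First, the claim that $\int_{\{g\le\lambda\}}g^{p}\,dt$ is ``bounded directly by $\|g\|_{L^{p,r}}^{p}$ up to a constant via the layer-cake formula'' is false: since $L^{p,r}\not\hookrightarrow L^{p}$ for $r>p$, the layer-cake integral $\int_0^\lambda\mu^{p-1}|\{g>\mu\}|\,d\mu$ carries a logarithmic divergence at both ends when one only uses $|\{g>\mu\}|\lesssim\mu^{-p}\|g\|_{L^{p,\infty}}^{p}$; quantifying that logarithm is precisely the content of Lemma \ref{Ltqr}, and it cannot be waved away. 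Second, the ``technical heart'' of your plan --- showing that on the bad set $E_\lambda$, where $g$ is large but $|E_\lambda|$ is small, the $\dot H^1$ norm grows by a controlled factor --- is exactly the step you leave unproven, and it does not follow from smallness of measure plus dissipation: the Gronwall exponent on $E_\lambda$ is $\int_{E_\lambda}g^{p}\,dt$, which is the very quantity whose finiteness is in doubt, and there is no smallness of $\|u\|_{\dot B^{-1}_{\infty,\infty}}$ available to absorb the nonlinearity into $\|\nabla^2u\|_{L^2}^2$ as in the borderline theorem. As written, the proposal does not close.

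The paper's route is much more direct and avoids any time-splitting or iteration. One applies inequality \eqref{Ltz} of Lemma \ref{Ltqr} to $g(t)=\|u(t)\|_{\dot B^{-\alpha}_{\infty,\infty}}$ with $q=\tfrac{2}{1-\alpha}$ and the lower exponent $p=\tfrac{4}{3-2\alpha}$, obtaining
\begin{align*}
\int_0^T g^{\frac{2}{1-\alpha}}\,dt\;\le\;\|g\|_{L_T^{\frac{4}{3-2\alpha}}}^{\frac{4}{3-2\alpha}}+\|g\|_{L_T^{\frac{2}{1-\alpha},r}}^{\frac{2}{1-\alpha}}\bigl(\log(1+\|g\|_{L_T^\infty})\bigr)^{1-\frac{2}{(1-\alpha)r}}.
\end{align*}
The first term is bounded by $C\|u_0\|_{L^2}^{4/(3-2\alpha)}$ via the Gagliardo--Nirenberg inequality $\|u\|_{\dot B^{-\alpha}_{\infty,\infty}}\lesssim\|u\|_{L^2}^{\alpha-\frac12}\|\nabla u\|_{L^2}^{\frac32-\alpha}$ (this is where $\alpha>\tfrac12$ is used, not in any short-interval subcriticality) together with the $L^2$ energy inequality; this produces the factor $\exp(C\|u_0\|_{L^2}^{4/(3-2\alpha)})$. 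For the second term, $\|g\|_{L_T^\infty}\lesssim\|u\|_{L_T^\infty\dot H^1}^{\frac32-\alpha}\|u\|_{L_T^\infty L^2}^{\alpha-\frac12}$ enters only through a logarithm, so after exponentiating and applying Young's inequality (using that the exponent $1-\tfrac{2}{(1-\alpha)r}<1$) it contributes a term $\tfrac12\|u\|_{L_T^\infty\dot H^1}$ that is absorbed by the left-hand side, leaving the factor $\bigl(1+\|\nabla u_0\|_{L^2}^{(3-2\alpha)/(2\alpha-1)}\|u_0\|_{L^2}\bigr)\exp\bigl(C\|u\|_{L_T^{2/(1-\alpha),r}\dot B^{-\alpha}_{\infty,\infty}}^{r}\bigr)$. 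Your attribution of the initial-data factors to local existence times and interval-counting is therefore also off the mark; they arise from interpolation against the energy inequality and from the final absorption step.
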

Moreover, for the  borderline case $L_T^\infty\dot{B}^{-1}_{\infty,\infty}(\mathbb{R}^3)$, we consider the sum space $L_{T}^{\frac{2}{1-\alpha}}
\dot{B}^{-\alpha}_{\infty,\infty}(\mathbb{R}^3)+{L_T^\infty\dot{B}^{-1}_{\infty,\infty}}(\mathbb{R}^3)$  and  have the following result:
\begin{theorem}\label{endpoint case}
	Let $\alpha\in[0,1)$ and $u$ be the Leray-Hopf  solution of the Navier–Stokes equations \eqref{NS} with initial data $u_0 \in H^1(\mathbb{R}^3)$ and $\nabla\cdot u_0 = 0$ satisfying the following structure:
	$$u(t,x)=u_1(t,x)+u_2(t,x),$$
	where, \begin{equation}
		u_1\in L_{T}^{\frac{2}{1-\alpha}}
		\dot{B}^{-\alpha}_{\infty,\infty}(\mathbb{R}^3),\quad
		u_2\in{L_T^\infty\dot{B}^{-1}_{\infty,\infty}}(\mathbb{R}^3) . 
	\end{equation}
	Then there exists $\epsilon_0>0$ such that for any $	\|u_2\|_{L_T^\infty\dot{B}^{-1}_{\infty,\infty}(\mathbb{R}^3)} \leq \epsilon\leq\epsilon_0$, we have
	\begin{align*}
		\|\nabla u\|_{L_T^\infty L^2(\mathbb{R}^3)}+\|\nabla^2 u\|_{L^2_TL^2 (\mathbb{R}^3)}
		\leq \|\nabla u_0\|_{L^2(\mathbb{R}^3)}\exp\left(C\int_{0}^{T}\|u_1(s)\|_{\dot{B}^{-\alpha}_{\infty,\infty}(\mathbb{R}^3)}^{\frac{2}{1-\alpha}}ds\right).
	\end{align*}
\end{theorem}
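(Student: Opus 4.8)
The plan is to run the same energy argument that proves Theorem \ref{LpB}, but to absorb the contribution of the borderline piece $u_2$ into the dissipation using its smallness. First I would take the $L^2$ inner product of the momentum equation with $-\Delta u$ to obtain the standard identity
\begin{align*}
\frac12\frac{d}{dt}\|\nabla u\|_{L^2}^2+\|\nabla^2 u\|_{L^2}^2=\int_{\mathbb{R}^3}(u\cdot\nabla u)\cdot\Delta u\,dx,
\end{align*}
the pressure term dropping by the divergence-free condition. The only thing to estimate is the trilinear term, and here I would split $u=u_1+u_2$ in the transported factor, writing the right-hand side as $\int (u_1\cdot\nabla u)\cdot\Delta u+\int (u_2\cdot\nabla u)\cdot\Delta u$. (One should be slightly careful about which copy of $u$ carries the $\dot B^{-\alpha}_{\infty,\infty}$ or $\dot B^{-1}_{\infty,\infty}$ norm; by integration by parts and $\nabla\cdot u=0$ the trilinear form $\int(v\cdot\nabla u)\cdot\Delta u$ can be rearranged so that the factor $v$ is the one measured in the negative Besov norm, which is exactly the structure used in the proof of Theorem \ref{LpB}.)

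For the $u_1$ term I would invoke verbatim the key bilinear/paraproduct estimate already established in the proof of Theorem \ref{LpB}: a bound of the shape
\begin{align*}
\left|\int_{\mathbb{R}^3}(u_1\cdot\nabla u)\cdot\Delta u\,dx\right|\lesssim \|u_1\|_{\dot B^{-\alpha}_{\infty,\infty}}\|\nabla^2 u\|_{L^2}^{1+\alpha}\|\nabla u\|_{L^2}^{1-\alpha}\leq \frac14\|\nabla^2 u\|_{L^2}^2+C\|u_1\|_{\dot B^{-\alpha}_{\infty,\infty}}^{\frac{2}{1-\alpha}}\|\nabla u\|_{L^2}^2,
\end{align*}
using Bernstein inequalities to interpolate the derivatives of $u$ against the low-regularity norm of $u_1$ and then Young's inequality to split off $\tfrac14\|\nabla^2u\|_{L^2}^2$. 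For the $u_2$ term I would run the same paraproduct estimate but now at the endpoint $\alpha=1$, which gives the scale-invariant bound
\begin{align*}
\left|\int_{\mathbb{R}^3}(u_2\cdot\nabla u)\cdot\Delta u\,dx\right|\lesssim \|u_2\|_{\dot B^{-1}_{\infty,\infty}}\,\|\nabla^2 u\|_{L^2}^2\leq C_*\,\epsilon\,\|\nabla^2 u\|_{L^2}^2,
\end{align*}
with $C_*$ the absolute constant from that estimate. Choosing $\epsilon_0=\tfrac{1}{4C_*}$, for any $\|u_2\|_{L_T^\infty\dot B^{-1}_{\infty,\infty}}\le\epsilon\le\epsilon_0$ this term is bounded by $\tfrac14\|\nabla^2u\|_{L^2}^2$ and can be absorbed into the left-hand side, leaving only the $u_1$ contribution.

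Combining everything yields, after absorbing the two $\tfrac14\|\nabla^2u\|_{L^2}^2$ terms,
\begin{align*}
\frac{d}{dt}\|\nabla u\|_{L^2}^2+\|\nabla^2 u\|_{L^2}^2\leq C\|u_1(t)\|_{\dot B^{-\alpha}_{\infty,\infty}}^{\frac{2}{1-\alpha}}\|\nabla u\|_{L^2}^2,
\end{align*}
and Grönwall's inequality (the time integrability of $t\mapsto\|u_1(t)\|_{\dot B^{-\alpha}_{\infty,\infty}}^{2/(1-\alpha)}$ being exactly the hypothesis $u_1\in L_T^{2/(1-\alpha)}\dot B^{-\alpha}_{\infty,\infty}$) gives the stated bound on $\|\nabla u\|_{L_T^\infty L^2}+\|\nabla^2u\|_{L_T^2L^2}$; higher regularity then propagates by the usual bootstrap, exactly as in the remark following Theorem \ref{LpB}. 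The main obstacle is purely bookkeeping: making sure the endpoint paraproduct estimate for the $u_2$ term is genuinely scale-invariant (linear in $\|\nabla^2u\|_{L^2}^2$ with no leftover powers of $\|\nabla u\|_{L^2}$) so that smallness alone suffices to absorb it — this forces the exponent $\infty$ in time for $u_2$, since there is no room to trade a time integration against a sub-critical power of the dissipation. Everything else is a direct reprise of the computation behind Theorem \ref{LpB}.
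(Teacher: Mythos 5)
Your overall strategy (energy estimate at the $\dot H^1$ level, split the trilinear term according to the two pieces, absorb the borderline piece by smallness) is the right general shape, but the central step is not available and, as stated, appears to be false. You propose to bound
\begin{align*}
\left|\int_{\mathbb{R}^3}(u_1\cdot\nabla u)\cdot\Delta u\,dx\right|\lesssim \|u_1\|_{\dot B^{-\alpha}_{\infty,\infty}}\|\nabla u\|_{L^2}^{1-\alpha}\|\nabla^2 u\|_{L^2}^{1+\alpha}
\end{align*}
``verbatim'' from the proof of Theorem \ref{LpB}. But that proof only establishes the \emph{diagonal} estimate: it integrates by parts to get $\int \partial_j u^i\,\partial_k u^j\,\partial_k u^i$, bounds this by $\|\nabla u\|_{L^3}^3$, and applies Lemma \ref{1} to $u$ itself. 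No off-diagonal trilinear estimate with an arbitrary transport field $v$ measured only in $\dot B^{-\alpha}_{\infty,\infty}$ is proved anywhere, and it cannot hold in the form you need: after integrating by parts you face $\int \partial_k v^j\,\partial_j u^i\,\partial_k u^i$, and in the paraproduct decomposition the term where $v$ carries the highest frequency requires roughly $\nabla u\otimes\nabla u\in \dot B^{1+\alpha}_{1,1}$, i.e.\ more than $u\in\dot H^2$, whenever $\alpha>0$. A resonant choice ($v$ concentrated at frequency $2^N$ with $\|v\|_{\dot B^{-\alpha}_{\infty,\infty}}=1$, hence $\|v\|_{L^\infty}\sim 2^{N\alpha}$, paired with a $u$ having a small bump at the same frequency) makes the left side blow up while the right side stays bounded. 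In the diagonal case this resonance is excluded because the high frequencies of the transport field are simultaneously constrained by $\|\nabla^2 u\|_{L^2}$; for an arbitrary decomposition $u=u_1+u_2$ you have no Sobolev control on $u_1$ or $u_2$ individually — they are only assumed to lie in negative-regularity Besov spaces in $x$. (A smaller issue of the same nature: your integration by parts also needs $\nabla\cdot u_1=\nabla\cdot u_2=0$, which is not assumed.)

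The paper avoids this by never using the decomposition $u=u_1+u_2$ inside the nonlinearity. Instead it builds its own splitting $u=u_l+u_h$ out of Littlewood--Paley projections of $u$ itself, with a time-dependent cutoff $j(t)$ chosen so that $U(t)\sim\epsilon\,2^{j(t)(1-\alpha)}$, where $U(t)$ majorizes the dyadic blocks of $u_1$. The hypotheses on $u_1,u_2$ are used only to show $\|u_l(t)\|_{\dot B^{-\alpha}_{\infty,\infty}}\lesssim U(t)$ and $\|u_h(t)\|_{\dot B^{-1}_{\infty,\infty}}\lesssim\epsilon$. Because $u_l,u_h$ are frequency truncations of $u$, their $\dot H^1$ and $\dot H^2$ norms are dominated by those of $u$, so the diagonal Lemma \ref{1} applies to each piece of $\|\nabla u\|_{L^3}^3\lesssim\|\nabla u_l\|_{L^3}^3+\|\nabla u_h\|_{L^3}^3$, and the rest of your absorption argument then goes through exactly as you describe. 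To repair your proof you would need to insert this frequency-localization step (or prove a genuinely new off-diagonal trilinear estimate, which the scaling and resonance considerations above rule out in the stated form).
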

Now, we extend these results to the 2D supercritical surface quasi-geostrophic equations.\\
\subsection{Regularity criterion for  2D supercritical Surface Quasi-Geostrophic equations}
We consider the surface quasi-geostrophic  (SQG) equations on $\mathbb{R}^2$:
\begin{align}\label{SQG}
	\begin{cases}
		\partial_t \theta+\nabla^\perp(-\Delta)^{-\frac{1}{2}} \theta\cdot\nabla \theta+(-\Delta)^{\frac{\alpha}{2}} \theta =0,\\
		\theta(t,x)|_{t=0}=\theta_0(x),
	\end{cases}
\end{align}
where $\nabla^\perp=(-\partial_2,\partial_1)$, $\alpha\in(0,2]$ and $\theta=\theta(t,x)$ is a scalar function of $x\in\mathbb{R}^2$ and $t>0$, representing the potential temperature.

SQG equation is a vital model for large-scale fluid dynamics in Earth's atmosphere and oceans. It offers insights into material transport processes, aiding the understanding of atmospheric and oceanic circulation, as well as eddy dynamics. SQG has applications in environmental science for modeling substance dispersion and is valuable in engineering, particularly for coastal and marine systems design. 

The property of scaling invariance in the context of the SQG equations \eqref{SQG} posits that
\begin{equation}
	\theta_\lambda(t,x) = \lambda^{\alpha - 1}\theta(\lambda x, \lambda^\alpha t),
\end{equation}
where $\lambda > 0$ serves as a scaling parameter. This scaling transformation preserves the critical characteristics of a series  spaces and plays a pivotal role in the analysis of the system dynamics.

Equations \eqref{SQG} are commonly classified as supercritical, critical, and subcritical SQG for $0 < \alpha < 1$, $\alpha = 1$, and $1 < \alpha \leq 2$, respectively.  Resnick \cite{27 Resnick} rigorously established the existence of a global weak solution.

In the subcritical case ($1 < \alpha \leq 2$), scenario, Constantin and Wu \cite{10Constantin} demonstrated the unique global smooth solution emanating from sufficiently smooth initial data. 

The critical case ($\alpha = 1$) is resolved independently by Kieslev, Nazarov, and Volberg \cite{23  Kiselev}, Caffarelli and Vasseur \cite{2Caffarelli}, Kieslev and Nazarov \cite{22 Kiselev}, and Constantin and Vicol \cite{11  Constantin}, utilizing diverse sophisticated methods (see  		\cite{ci,sqgb,sqgzapde,hungperterm} and references therein).

It is still an open problem for global regularity for the supercritical case ($0 < \alpha <1$). In the
spirit of the conditional regularity results available for the 3D Navier–Stokes equations, a series of  conditional regularity results was proposed.We recall that the first conditional regularity result for solutions to (1.1) was obtained by Constantin, Majda and
Tabak \cite{NewPC5}. Chae \cite{3 Chae} established a  Ladyzhenskaya-Prodi-Serrin type regularity criterion, that is 
\begin{align*}
	\theta\in L^p(0,T;  L^q(\mathbb{R}^2)), \quad \quad \forall p\in\left(\frac{2}{\alpha},\infty\right) \quad\text{with}\quad \frac{2}{q}+\frac{\alpha}{p}\leq \alpha.
\end{align*}
Later in \cite{NewPC4,14 Constantin, NewPC6}, P. Constantin and J. Wu demonstrated that a weak solution can transform into a classical solution if 
\begin{align*}
	\theta\in L^\infty(0,T;  C^{\delta}(\mathbb{R}^2)), \quad\text{with}\quad \delta >1- \alpha.
\end{align*} 
Furthermore, the outcomes of \cite{19 HJ Dong} enhanced  criterion through the utilization of Besov spaces, the authors derived regularity conditions  in scaling invariant space as
\begin{align*}
	\theta\in L^p(0,T;  {B}^{\beta}_{q,\infty}(\mathbb{R}^2)),\quad \forall p\in[1,\infty),\quad\forall q\in(2,\infty) \quad\text{with}\quad \beta=1-\alpha+\frac{2}{q}+\frac{\alpha}{p}.
\end{align*}
Later, in \cite{Dong} and \cite{ZhuanYe} authors also obtained the regularity criterion result.

In our paper, we deal with the endpoint case of \cite{19 HJ Dong}, i.e. the case $q=\infty$, and improve the result to the Lorentz space. We obtain the following prior estimate:
\begin{theorem}\label{SQGTH}
	For any $\alpha\in(0,1), \varepsilon\in [0,\frac{\alpha}{2})$, assume $\theta$ is a viscosity  solution to \eqref{SQG} with initial data $\theta_0\in H^2(\mathbb{R}^2)$,  The solution $\theta$ is regular in $(0, T]$, provided that
	\begin{align*}
		\theta\in L_{T}^{\frac{\alpha}{\epsilon}}\dot{C}^{1-\alpha+\epsilon}(\mathbb{R}^2).
	\end{align*}
	Moreover, we have the following prior estimate: 
	\begin{align}\label{ll}
		\| \theta\|_{L^\infty_T\dot{H}^2(\mathbb{R}^2)}+\|\theta\|_{L^2_T \dot{H}^{2+\frac{\alpha}{2}}(\mathbb{R}^2)}\leq \| \theta_0\|_{\dot{H}^2(\mathbb{R}^2)}\exp\left(C\int_{0}^{T}\|\theta(s)\|_{\dot{C}^{1-\alpha+\epsilon}(\mathbb{R}^2)}^{\frac{\alpha}{\epsilon}}ds\right).
	\end{align}	
\end{theorem}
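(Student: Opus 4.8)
The plan is to perform an energy estimate at the level of $\dot H^2$, following the same strategy used for the Navier--Stokes results in Theorem~\ref{LpB}, but adapted to the nonlocal transport structure of \eqref{SQG}. First I would apply $\Lambda^2 := (-\Delta)$ to the equation, pair with $\Lambda^2\theta$ in $L^2$, and use that the transport velocity $v = \nabla^\perp\Lambda^{-1}\theta$ is divergence-free. The dissipative term contributes $\|\theta\|_{\dot H^{2+\alpha/2}}^2$ after integration by parts (using that $\Lambda^\alpha$ commutes with $\Lambda^2$), so the whole game is to bound the nonlinear term
\begin{align*}
	\mathcal N := \int_{\mathbb R^2} \Lambda^2\bigl(v\cdot\nabla\theta\bigr)\,\Lambda^2\theta\,dx
\end{align*}
by something absorbable into the dissipation plus a term of the form $C\|\theta\|_{\dot C^{1-\alpha+\varepsilon}}^{\alpha/\varepsilon}\|\theta\|_{\dot H^2}^2$. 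After the usual cancellation $\int (v\cdot\nabla \Lambda^2\theta)\Lambda^2\theta = 0$, one is left with commutator-type pieces $\int [\Lambda^2, v\cdot\nabla]\theta \cdot \Lambda^2\theta$, which I would expand with a Leibniz/Bony paracommutator decomposition; schematically the worst contributions look like $\int \Lambda^2 v \cdot \nabla\theta\, \Lambda^2\theta$ and $\int \Lambda v\cdot \nabla\Lambda\theta\,\Lambda^2\theta$, with all derivatives on $v$ traded for derivatives on $\theta$ via the Riesz-transform structure $v = \nabla^\perp\Lambda^{-1}\theta$ (so $\Lambda^k v$ costs essentially $\Lambda^k\theta$ in $L^p$).

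The key step is the interpolation that puts one factor in the critical Hölder space $\dot C^{1-\alpha+\varepsilon}$. Writing $s = 1-\alpha+\varepsilon$, I would estimate the bad terms using a Hölder inequality with three factors and then a Gagliardo--Nirenberg/Besov interpolation of the form
\begin{align*}
	\|\Lambda^a\theta\|_{L^p} \lesssim \|\theta\|_{\dot C^{s}}^{1-\eta}\,\|\theta\|_{\dot H^{2+\alpha/2}}^{\eta},
\end{align*}
choosing the exponents so that the total number of derivatives matches $2\cdot 2 = 4$ (the weight from $\Lambda^2\theta$ appears twice) and so that the scaling is consistent: since $\dot C^s$ has regularity $s = 1-\alpha+\varepsilon$ and $\dot H^{2+\alpha/2}$ has effective regularity $2+\alpha/2 - 1 = 1+\alpha/2$ relative to $L^\infty$-type scaling in 2D, the homogeneity bookkeeping forces the exponent $\eta$ on the high-regularity factor to satisfy $2\eta = $ (something like) $\frac{2\varepsilon}{\alpha}$ up to constants, i.e. the power of $\|\theta\|_{\dot H^{2+\alpha/2}}$ that appears is $<2$, so Young's inequality absorbs it and leaves the remaining power of $\|\theta\|_{\dot H^2}$ multiplied by $\|\theta\|_{\dot C^{s}}^{\alpha/\varepsilon}$. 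Concretely I expect the time-integrability exponent $\frac{\alpha}{\varepsilon}$ to emerge exactly as the conjugate arising from absorbing $\|\theta\|_{\dot H^{2+\alpha/2}}^{2-\delta}$ with $\delta$ proportional to $\varepsilon/\alpha$.

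Once $\mathcal N$ is bounded by $\tfrac12\|\theta\|_{\dot H^{2+\alpha/2}}^2 + C\|\theta\|_{\dot C^{s}}^{\alpha/\varepsilon}\|\theta\|_{\dot H^2}^2$, the differential inequality
\begin{align*}
	\frac{d}{dt}\|\theta\|_{\dot H^2}^2 + \|\theta\|_{\dot H^{2+\alpha/2}}^2 \le C\|\theta\|_{\dot C^{s}}^{\alpha/\varepsilon}\|\theta\|_{\dot H^2}^2
\end{align*}
follows, and Grönwall's lemma with the hypothesis $\theta\in L^{\alpha/\varepsilon}_T\dot C^{s}$ yields \eqref{ll}; the finiteness of $\|\theta\|_{L^\infty_T\dot H^2}$ then bootstraps to full smoothness by standard parabolic regularization for the supercritical SQG equation (the $\dot H^2$ bound in 2D controls $\|\nabla\theta\|_{L^\infty}$ up to a logarithm, which is enough to propagate all higher norms). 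For the endpoint $\varepsilon = 0$ one reads the statement as the known condition $\theta\in L^\infty_T\dot C^{1-\alpha}$, which is consistent with the Constantin--Wu-type criterion recalled above. I expect the main obstacle to be the careful Littlewood--Paley/commutator analysis of $[\Lambda^2, v\cdot\nabla]\theta$: one must check that every term genuinely has at most the claimed power $\eta<1$ on the high-regularity factor, which requires that no term forces three derivatives or more to land on a single $\theta$ paired against $\Lambda^2\theta$ without a compensating low-frequency factor in $\dot C^s$ — controlling the "doubly high--high" interaction is the delicate point, and I would handle it by a Bernstein inequality after a paraproduct splitting, exactly as in the treatment of the critical Besov norm for Navier--Stokes.
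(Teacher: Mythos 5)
Your proposal follows essentially the same route as the paper: a $\dot H^2$ energy estimate, the divergence-free cancellation $\int(v\cdot\nabla\Delta\theta)\Delta\theta=0$, a Bony paraproduct treatment of the commutator $[\Delta,v\cdot\nabla]\theta$ (which the paper packages as the bilinear operator $\mathcal{B}$ in Lemma \ref{2}), interpolation so that the dissipative factor appears with power $2-\tfrac{2\epsilon}{\alpha}<2$, Young's inequality producing the exponent $\tfrac{\alpha}{\epsilon}$, and Gr\"onwall. The exponent bookkeeping you describe matches the paper's estimate $\left|\int\mathcal{B}(\theta,\theta)\Delta\theta\right|\lesssim\|\theta\|_{\dot H^2}^{2\epsilon/\alpha}\|\theta\|_{\dot H^{2+\alpha/2}}^{(2\alpha-2\epsilon)/\alpha}\|\theta\|_{\dot C^{1-\alpha+\epsilon}}$, so this is the same proof in outline.
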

\begin{remark}
	In fact the statement holds true for Leray-Hopf weak solution, given their satisfaction of truncated energy estimates \cite{2Caffarelli}.
\end{remark}
To extend  Theorem \ref{SQGTH} to Lorentz space, the key ingredient is Lemma  \ref{Ltqr}, and we obtain the following theorem.
\begin{theorem}\label{SQGTH 1}
	Let $\alpha\in(0,1), \epsilon \in (0,\frac{\alpha^2}{8})$ and $\theta$ be a Leray-Hopf  weak solution to \eqref{SQG} with initial data $\theta_0\in H^2(\mathbb{R}^2)$, then for any $\frac{\alpha}{\epsilon}<r<\infty$,
	\begin{align*}
		\| \theta\|_{L^\infty_T\dot{H}^2(\mathbb{R}^2)}+\|\theta\|_{L^2_T \dot{H}^{2+\frac{\alpha}{2}}(\mathbb{R}^2)}
		&\lesssim 
		\|\theta_0\|_{\dot{H}^2(\mathbb{R}^2)} 
		\left(1+ \|\theta_0\|_{L^2(\mathbb{R}^2)}
		\left(1+\|\theta_0\|_{L^2(\mathbb{R}^2)}^{-\frac{8\epsilon}{\alpha^2}}\right)
		\left(1+\|\theta_0\|_{\dot{H^2}(\mathbb{R}^2)}^{\frac{4\alpha}{\alpha^2-8\epsilon}}\right)
		\right)\cdot\\
		&\qquad\cdot\exp\left(C\|\theta\|_{L_T^{\frac{\alpha}{\epsilon},r}\dot{C}^{1-\alpha+\epsilon}(\mathbb{R}^2)}^{r}\right).		
	\end{align*}
\end{theorem}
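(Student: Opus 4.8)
\textbf{Proof proposal for Theorem \ref{SQGTH 1}.}

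The plan is to combine the \emph{a priori} nonlinear bound from the proof of Theorem \ref{SQGTH} with a Lorentz-space interpolation device (the stated Lemma \ref{Ltqr}) that lets us replace the Lebesgue norm $\|\theta\|_{L_T^{\alpha/\epsilon}\dot C^{1-\alpha+\epsilon}}$ in the Grönwall exponent by the weaker Lorentz norm $\|\theta\|_{L_T^{\alpha/\epsilon,r}\dot C^{1-\alpha+\epsilon}}$, at the cost of a power $r$ and of absorbing a piece into terms controlled by the energy. First I would recall from the proof of \eqref{ll} the differential inequality governing $y(t):=\|\theta(t)\|_{\dot H^2}^2$, which has the schematic form $\tfrac{d}{dt}y(t)+c\|\theta(t)\|_{\dot H^{2+\alpha/2}}^2\le C\,\|\theta(t)\|_{\dot C^{1-\alpha+\epsilon}}^{\alpha/\epsilon}\,y(t)$, the exponent $\alpha/\epsilon$ being exactly what makes $L_T^{\alpha/\epsilon}\dot C^{1-\alpha+\epsilon}$ scaling-critical. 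The restriction $\epsilon\in(0,\alpha^2/8)$ is what guarantees $\alpha/\epsilon>8/\alpha$, giving enough room for the interpolation step below.

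Next I would split the time interval according to the size of the (decreasing) energy $\|\theta(t)\|_{L^2}$, or equivalently split the integrand $g(t):=\|\theta(t)\|_{\dot C^{1-\alpha+\epsilon}}^{\alpha/\epsilon}$ into a part that is already integrable with an explicit bound coming from the energy estimate $\|\theta\|_{L_T^\infty L^2}^2+\|(-\Delta)^{\alpha/4}\theta\|_{L_T^2 L^2}^2\le\|\theta_0\|_{L^2}^2$ and a Bernstein/interpolation estimate relating $\dot C^{1-\alpha+\epsilon}$, $L^2$ and $\dot H^{2}$ (this is where the factors $\|\theta_0\|_{L^2}^{-8\epsilon/\alpha^2}$ and $\|\theta_0\|_{\dot H^2}^{4\alpha/(\alpha^2-8\epsilon)}$ are produced), and a remaining part on which one applies Lemma \ref{Ltqr}: on a set where the Lorentz norm controls things, $\int g(t)\,dt\lesssim \|\theta\|_{L_T^{\alpha/\epsilon,r}\dot C^{1-\alpha+\epsilon}}^{r}$ up to lower-order contributions. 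Plugging the resulting bound for $\int_0^T g(t)\,dt$ into Grönwall's inequality applied to $y(t)$ yields
\begin{align*}
\|\theta\|_{L_T^\infty\dot H^2}^2+\|\theta\|_{L_T^2\dot H^{2+\alpha/2}}^2
\le \|\theta_0\|_{\dot H^2}^2\,\exp\!\left(C\,\Phi(\theta_0)+C\,\|\theta\|_{L_T^{\alpha/\epsilon,r}\dot C^{1-\alpha+\epsilon}}^{r}\right),
\end{align*}
and taking square roots, with $\Phi(\theta_0)$ the explicit polynomial-type quantity in $\|\theta_0\|_{L^2}$ and $\|\theta_0\|_{\dot H^2}$ appearing in the statement, gives the claimed estimate after using $e^{a+b}\le e^a e^b$ and $\exp(C\Phi)\lesssim 1+\Phi^{\text{(powers)}}$ to recast the $\Phi$-factor into the displayed product form. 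Regularity of $\theta$ on $(0,T]$ then follows exactly as in the remark after Theorem \ref{SQGTH}, picking a good initial time $t_0$ with $\theta(t_0)\in H^2$.

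The main obstacle I expect is the Lorentz-space step: $L^{p,r}$ with $r>p$ is \emph{larger} than $L^p$, so a naive Hölder inequality loses; one must instead use the real-interpolation/duality characterization of $L^{p,r}$ (Lemma \ref{Ltqr}) together with the gain coming from the dissipation term $\|\theta\|_{L_T^2\dot H^{2+\alpha/2}}$ — that is, one trades a slice of the parabolic smoothing for the summability deficit, which is precisely why the exponent $r$ (rather than $\alpha/\epsilon$) and the constraint $\epsilon<\alpha^2/8$ enter. Getting the bookkeeping of these two competing scales right, and checking that the level-set decomposition of $g$ does not destroy the criticality (so that the energy-controlled part really is bounded by an explicit function of $\theta_0$ alone), is the delicate part; everything else is the same commutator estimates and Bernstein inequalities used to prove Theorem \ref{SQGTH}.
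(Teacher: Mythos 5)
Your overall strategy coincides with the paper's: start from the a priori bound \eqref{ll}, apply Lemma \ref{Ltqr} (with $d=1$, to the function $t\mapsto\|\theta(t)\|_{\dot{C}^{1-\alpha+\epsilon}}$ and $q=\alpha/\epsilon$) so as to replace the $L_T^{\alpha/\epsilon}$ norm in the Gr\"{o}nwall exponent by the Lorentz norm, then interpolate the resulting auxiliary norms against the energy and the left-hand side and absorb. The gap is in your endgame. You assert that Gr\"{o}nwall yields a bound of the form $\exp\bigl(C\Phi(\theta_0)+C\|\theta\|^r_{L_T^{\alpha/\epsilon,r}\dot{C}^{1-\alpha+\epsilon}}\bigr)$ with $\Phi$ an explicit function of the initial data, and then invoke $\exp(C\Phi)\lesssim 1+\Phi^{\mathrm{powers}}$. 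The second step is false (an exponential is never dominated by a polynomial for large argument), and the first is circular: the correction term produced by \eqref{Ltz2} is $\log\bigl(1+\|\theta\|_{L_T^\infty\dot{C}^{1-\alpha+\epsilon}}\cdots\bigr)^{1-\alpha/(\epsilon r)}\|\theta\|^{\alpha/\epsilon}_{L_T^{\alpha/\epsilon,r}\dot{C}^{1-\alpha+\epsilon}}$, and $\|\theta\|_{L_T^\infty\dot{C}^{1-\alpha+\epsilon}}$ is \emph{not} controlled by $\theta_0$ and the energy inequality alone; by Gagliardo--Nirenberg it costs a power of $\|\theta\|_{L_T^\infty\dot{H}^2}$, which is precisely the unknown being estimated.

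The paper's resolution, which your sketch gestures at ("the delicate part") but does not carry out, is to keep the logarithm inside the exponent and apply Young's inequality in the form $\log(1+X)^{1-\alpha/(\epsilon r)}\,Y^{\alpha/\epsilon}\le \log(1+X)+CY^r$, so that exponentiation produces the \emph{polynomial} factor $(1+X)$ rather than $e^{CX}$. One then bounds $X$ by Gagliardo--Nirenberg through $\|\theta_0\|_{L^2}$, $\|\theta\|_{L_T^\infty\dot{H}^2}^{(2-\alpha+\epsilon)/2}$ and $\|\theta\|_{L_T^2\dot{H}^{2+\alpha/2}}^{2\epsilon(2-\alpha+\epsilon)/(2\alpha-\alpha^2-4\epsilon)}$; the hypothesis $\epsilon<\alpha^2/8$ is exactly what keeps the total power of these unknown quantities below one, so a final Young inequality absorbs them into the left-hand side and leaves the stated polynomial in $\|\theta_0\|_{L^2}$ and $\|\theta_0\|_{\dot{H}^2}$. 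With this replacement your argument closes; the remaining ingredients (Lemma \ref{Ltqr}, the $L^2$ energy inequality, the interpolation of the H\"older norm) match the paper's proof.
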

In the borderline case, parallel to the Navier-Stokes equations, we consider the sum space $L_{T}^{\frac{\alpha}{\epsilon}}\dot{C}^{1-\alpha+\epsilon}(\mathbb{R}^2) + L_T^\infty\dot{C}^{1-\alpha}(\mathbb{R}^2)$, and have the following  theorem:

\begin{theorem}\label{SQGTH 2}
	Let $\alpha\in[0,1)$ and $\theta$ be a Lerat-Hopf weak solution to \eqref{SQG} with initial data $\theta_0\in H^2(\mathbb{R}^2)$, which satisfying the following structure:
	$$\theta(t,x)=\theta_1(t,x)+\theta_2(t,x),$$
	where, \begin{equation}
		\theta_1\in L_{T}^{\frac{\alpha}{\epsilon}}
		\dot{C}^{1-\alpha+\epsilon}(\mathbb{R}^2),\quad
		\theta_2\in{L_T^\infty\dot{C}^{1-\alpha}}(\mathbb{R}^2) . 
	\end{equation}
	Then there exists $\delta_0>0$ such that for any $	\|\theta_2\|_{L_T^\infty\dot{C}^{1-\alpha}(\mathbb{R}^2)}\leq \delta \leq \delta_0$, we have
	\begin{align*}
		\|\theta\|_{L^\infty_T\dot{H}^2(\mathbb{R}^2)}+\|\theta\|_{L_T^2\dot{H}^{2+\frac{\alpha}{2}}(\mathbb{R}^2) }
		&\leq \|\theta_0\|_{\dot{H}^2(\mathbb{R}^2)}\exp\left(C\int_{0}^{T}\|\theta_1(s)\|_{C^{1-\alpha+\epsilon}(\mathbb{R}^2)}^{\frac{\alpha}{\epsilon}}ds\right).
	\end{align*}
\end{theorem}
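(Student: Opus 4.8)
The plan is to run the same energy estimate in $\dot H^2(\mathbb{R}^2)$ used to establish Theorem \ref{SQGTH}, but now carefully tracking the splitting $\theta=\theta_1+\theta_2$ in the transport nonlinearity so that the $\theta_2$-contribution can be absorbed into the dissipation thanks to the smallness of $\|\theta_2\|_{L_T^\infty\dot{C}^{1-\alpha}}$. First I would recall the a priori identity obtained by applying $\Lambda^2=(-\Delta)$ to \eqref{SQG}, pairing with $\Lambda^2\theta$ in $L^2$, and using the divergence-free structure of $u=\nabla^\perp\Lambda^{-1}\theta$; this yields
\begin{align}\label{proposal-energy}
	\frac12\frac{d}{dt}\|\theta\|_{\dot H^2}^2+\|\theta\|_{\dot H^{2+\frac{\alpha}{2}}}^2=-\int_{\mathbb{R}^2}\bigl[\Lambda^2,u\cdot\nabla\bigr]\theta\cdot\Lambda^2\theta\,dx=:\mathcal{N}.
\end{align}
Writing $u=u_1+u_2$ with $u_i=\nabla^\perp\Lambda^{-1}\theta_i$, I would split $\mathcal{N}=\mathcal{N}_1+\mathcal{N}_2$ according to which velocity field appears. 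The term $\mathcal{N}_1$ is handled exactly as in the proof of Theorem \ref{SQGTH}: a commutator/paraproduct estimate (Bony decomposition plus the Kato–Ponce type bound, or the Hölder-in-frequency argument already used there) bounds it by $C\|\theta_1\|_{\dot C^{1-\alpha+\epsilon}}\|\theta\|_{\dot H^{2+\frac{\epsilon}{?}}}^{?}\cdots$ and, after interpolating $\dot H^{2+\sigma}$ between $\dot H^2$ and $\dot H^{2+\frac{\alpha}{2}}$ and applying Young's inequality, gives $\tfrac14\|\theta\|_{\dot H^{2+\frac{\alpha}{2}}}^2+C\|\theta_1\|_{\dot C^{1-\alpha+\epsilon}}^{\frac{\alpha}{\epsilon}}\|\theta\|_{\dot H^2}^2$.

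The genuinely new point is $\mathcal{N}_2$. Here the commutator structure is essential: since $u_2$ is only in a critical space with \emph{no} extra integrability in time, one cannot afford any positive power of the $\dot H^{2+\frac{\alpha}{2}}$-norm with a large constant. I would estimate $\mathcal{N}_2$ by the commutator bound in terms of the $\dot C^{1-\alpha}$-norm of $\theta_2$ (equivalently, the $\dot B^{-\alpha}_{\infty,\infty}$-type norm of $u_2$ at the level of $\nabla u_2$), which produces a term of the form $C\|\theta_2\|_{\dot C^{1-\alpha}}\|\theta\|_{\dot H^{2+\frac{\alpha}{2}}}^2$ — the two derivatives' worth of gain from the commutator, combined with the $(1-\alpha)$-Hölder regularity of the coefficient, matches exactly the $\alpha$ derivatives supplied by the dissipation, with no room to spare. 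Choosing $\delta_0$ so that $C\delta_0\le \tfrac14$, this entire term is absorbed into the left-hand side of \eqref{proposal-energy}.

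Combining the two bounds, \eqref{proposal-energy} reduces to
\begin{align}\label{proposal-gronwall}
	\frac{d}{dt}\|\theta\|_{\dot H^2}^2+\|\theta\|_{\dot H^{2+\frac{\alpha}{2}}}^2\le C\|\theta_1(t)\|_{\dot C^{1-\alpha+\epsilon}}^{\frac{\alpha}{\epsilon}}\|\theta(t)\|_{\dot H^2}^2,
\end{align}
and Grönwall's inequality, using $\theta_1\in L_T^{\alpha/\epsilon}\dot C^{1-\alpha+\epsilon}$ to guarantee the time integral is finite, yields the stated estimate; the regularity of $\theta$ on $(0,T]$ then follows as in the remark after Theorem \ref{SQGTH} by a standard bootstrap from $H^2$ (together with the truncated energy inequality to start from an $H^2$ slice). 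The main obstacle is the borderline estimate of $\mathcal{N}_2$: one must verify that the commutator genuinely gains two full derivatives relative to the worst frequency interaction (so that only the Hölder norm $\|\theta_2\|_{\dot C^{1-\alpha}}$, and not a stronger norm, multiplies $\|\theta\|_{\dot H^{2+\frac{\alpha}{2}}}^2$), which requires a careful paraproduct analysis — in particular controlling the high–high-to-low interaction in $[\Lambda^2,u_2\cdot\nabla]\theta$ — rather than a crude product estimate. Once that is in place the smallness of $\delta$ does all the remaining work.
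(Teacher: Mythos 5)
There is a genuine gap at the heart of your argument: the splitting $\mathcal{N}=\mathcal{N}_1+\mathcal{N}_2$ according to $u=u_1+u_2$ cannot be closed with the norms you actually control. In the commutator $\left[(-\Delta),u_i\cdot\nabla\right]\theta$ (the paper's $\mathcal{B}(\theta_i,\theta)$), the paraproduct piece in which $\theta_i$ carries the \emph{high} frequency, namely $T_{hl}\bigl((-\Delta)^{-\frac12}\partial_{jk}\theta_i,\partial_l\theta\bigr)$, cannot be estimated by $\|\theta_i\|_{\dot C^{1-\alpha}}\|\theta\|_{\dot H^{2+\frac{\alpha}{2}}}$ (nor by $\|\theta_i\|_{\dot C^{1-\alpha+\epsilon}}$ times Sobolev norms of $\theta$): putting the high-frequency factor in an $L^\infty$-based H\"older space forces the low-frequency factor $S_{j-2}\nabla\theta$ into $L^2$, and the resulting weight $2^{j(1+\frac{\alpha}{2})}\|S_{j-2}\nabla\theta\|_{L^2}$ is not square-summable in $j$ (it tends to infinity, since $\|S_{j-2}\nabla\theta\|_{L^2}\to\|\nabla\theta\|_{L^2}$). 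This is exactly why every estimate in Lemma \ref{2} for $\mathcal{B}(g_1,g_2)$ with $g_1\neq g_2$ contains a term carrying an $L^2$-based Sobolev norm of one of the two arguments \emph{alone}. Your hypotheses give only H\"older control of $\theta_1$ and $\theta_2$ individually and Sobolev control of the sum $\theta$, so both $\mathcal{N}_1$ and $\mathcal{N}_2$ contain a term you cannot bound. You flag a ``careful paraproduct analysis'' of the high--high interaction as the remaining obstacle, but the problematic piece is the high--low one with the drift at high frequency, and with the decomposition you chose the required bound is not merely delicate --- it is false.

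The paper's proof avoids this by never splitting the nonlinearity along $\theta_1+\theta_2$. It instead decomposes $\theta$ itself into Littlewood--Paley pieces $\theta_l=\sum_{k\le j(t)}\dot{\Delta}_k\theta$ and $\theta_h=\sum_{k\ge j(t)+1}\dot{\Delta}_k\theta$ with a \emph{time-dependent} threshold chosen so that $2^{-\epsilon j(t)}U(t)\sim\delta$. Since $\theta_l,\theta_h$ are frequency projections of $\theta$, their Sobolev norms are dominated by those of $\theta$ (so Lemma \ref{2} applies with all $\dot H$-norms controlled by the energy), while the structural assumption $\theta=\theta_1+\theta_2$ enters only through the frequency-wise bounds \eqref{frequency theta}, yielding $\|\theta_h\|_{L_T^\infty\dot C^{1-\alpha}}\lesssim\delta$ and $\|\theta_l\|_{L_T^{\alpha/\epsilon}\dot C^{1-\alpha+\epsilon}}\lesssim\|\theta_1\|_{L_T^{\alpha/\epsilon}\dot C^{1-\alpha+\epsilon}}$. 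That balancing of the cutoff is the missing idea; your absorption-by-smallness and Gr\"onwall steps are fine once it is in place.
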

\section{Preliminaries}

\begin{definition}
	Define Lorentz spaces: for $q>0, s>0$,
	\begin{align*}
		\|g\|_{L^{q,s}(\mathbb{R}^d)}:=
		\begin{cases}
			\left(
			q\int_{0}^{\infty}
			(\lambda^q |\{|g|>\lambda\}|)^{\frac{s}{q}} \frac{d\lambda}{\lambda}
			\right)^{\frac{1}{s}},\quad s<\infty,\\
			\sup_{\lambda>0} \lambda (|\{|g|>\lambda\}|)^{\frac{1}{q}}, \quad s=\infty.
		\end{cases}
	\end{align*}
\end{definition}
\begin{definition}[Dyadic partition of Unity and Besov spaces]
	Let $\varphi\in C_c^\infty(\{\xi\in\mathbb{R}^3:\frac{3}{4}\leq|\xi|\leq\frac{8}{3}\})$ be the cut-off function satisfying $\sum_{j\in\mathbb{Z}}\varphi(2^{-j}\xi)=1$, for all $\xi\in\mathbb{R}^3\setminus \{0\}$, and the homogeneous dyadic blocks $\dot{\Delta}_j$ is defined by $\dot{\Delta}_ju=\varphi(2^{-j}D)u$.
	Let $s\in \mathbb{R}$ and $p,r \in [1,\infty]$. Define 
	\begin{align*}
		\|u\|_{\dot{B}_{p,r}^s(\mathbb{R}^d)}:=	\begin{cases}
			\left(
			\sum_{j\in \mathbb{Z}} 2^{rjs} \|\dot{\Delta}_ju\|_{L^p}^r
			\right)^{\frac{1}{r}},\quad r<\infty,\\
			\sup_{j\in \mathbb{Z}} 2^{js} \|\dot{\Delta}_ju\|_{L^p}, \quad r=\infty.
		\end{cases}
	\end{align*}
\end{definition}
\begin{definition}[Bony's Paraproduct decomposition]\label{Bony}
	Paradifferential calculus is a mathematical tool for splitting the sum $fg=\sum_{j,k}\dot{\Delta}_jf\dot{\Delta}_kg$ into three parts:
	\begin{align*}
		&R(f,g)=T_{hh}(f,g)=\sum_{|j-k|\leq2}\dot{\Delta}_jf\dot{\Delta}_kg,\\
		&T_{hl}(f,g)=\sum_{j\geq3+k}\dot{\Delta}_jf\dot{\Delta}_kg,\\
		&T_{lh}(f,g)=\sum_{j\leq-3+k}\dot{\Delta}_jf\dot{\Delta}_kg.
	\end{align*}
	Thus, we have
	\begin{align*}
		fg=\sum_{j,k}\dot{\Delta}_jf\dot{\Delta}_kg=T_{hl}(f,g)+T_{lh}(f,g)+R(f,g).
	\end{align*}
\end{definition}
From \cite{99Bahouri}, we can obtain the following classical Lemmas.
\begin{lemma}\label{T_hl}
	Let $p,r,r_1,r_2 \in[1,\infty]$ and $s\in \mathbb{R}$. Then for any $s_0>0$, we have
	\begin{align*}
		\|T_{hl}(f,g)\|_{\dot{B}_{p,r}^{s-s_0}} \lesssim \|f\|_{\dot{B}_{p,r_1}^s} \|g\|_{\dot{B}_{\infty,r_2}^{-s_0}},
		\quad \frac{1}{r}=\min\{1,\frac{1}{r_1}+\frac{1}{r_2}\}
	\end{align*}
\end{lemma}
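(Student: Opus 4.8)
The plan is to reorganize $T_{hl}$ as a sum of spectrally localized pieces and then combine the standard almost-orthogonality of dyadic blocks with the gain coming from the hypothesis $s_0>0$. First I would rewrite the paraproduct by collecting the low frequencies of $g$: setting $\widetilde{S}_j g:=\sum_{k\le j-3}\dot{\Delta}_k g$, we have $T_{hl}(f,g)=\sum_{j}\dot{\Delta}_j f\,\widetilde{S}_j g$. The crucial structural remark is that $\dot{\Delta}_j f$ is spectrally supported in the annulus $\{|\xi|\sim 2^j\}$ while $\widetilde{S}_j g$ is supported in the ball $\{|\xi|\lesssim 2^{j-3}\}$; hence each product $\dot{\Delta}_j f\,\widetilde{S}_j g$ is Fourier-supported in a fixed annulus $\{c_1 2^j\le|\xi|\le c_2 2^j\}$. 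Consequently $\dot{\Delta}_{j'}$ annihilates this product unless $|j-j'|\le N_0$ for some absolute $N_0$, so that $\dot{\Delta}_{j'}T_{hl}(f,g)=\sum_{|j-j'|\le N_0}\dot{\Delta}_{j'}(\dot{\Delta}_j f\,\widetilde{S}_j g)$ and, by the uniform $L^p$-boundedness of $\dot{\Delta}_{j'}$ together with Hölder, $\|\dot{\Delta}_{j'}T_{hl}(f,g)\|_{L^p}\lesssim\sum_{|j-j'|\le N_0}\|\dot{\Delta}_j f\|_{L^p}\,\|\widetilde{S}_j g\|_{L^\infty}$.

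Next I would convert this into an estimate on dyadic sequences. Writing $b_j:=2^{js}\|\dot{\Delta}_j f\|_{L^p}$ and $d_k:=2^{-ks_0}\|\dot{\Delta}_k g\|_{L^\infty}$, one has $\|(b_j)\|_{\ell^{r_1}}=\|f\|_{\dot{B}^s_{p,r_1}}$ and $\|(d_k)\|_{\ell^{r_2}}=\|g\|_{\dot{B}^{-s_0}_{\infty,r_2}}$. The key quantitative step is to bound the low-frequency truncation: since $\widetilde{S}_j g=\sum_{k\le j-3}\dot{\Delta}_k g$, we get $\|\widetilde{S}_j g\|_{L^\infty}\le\sum_{k\le j-3}2^{ks_0}d_k=2^{js_0}\sum_{\ell\ge 3}2^{-\ell s_0}d_{j-\ell}$. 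Because $s_0>0$, the sequence $(2^{-\ell s_0}\mathbf{1}_{\ell\ge 3})_\ell$ belongs to $\ell^1$, so Young's convolution inequality gives $\|\widetilde{S}_j g\|_{L^\infty}\lesssim 2^{js_0}c_j$ with $\|(c_j)\|_{\ell^{r_2}}\lesssim\|(d_k)\|_{\ell^{r_2}}=\|g\|_{\dot{B}^{-s_0}_{\infty,r_2}}$. This is exactly the place where the positivity of $s_0$ is indispensable: without it the geometric factor $2^{-\ell s_0}$ would not be summable and the low-frequency sum could diverge.

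Then I would assemble the Besov norm. Multiplying the block estimate by $2^{j'(s-s_0)}$ and inserting the two bounds gives $2^{j'(s-s_0)}\|\dot{\Delta}_{j'}T_{hl}(f,g)\|_{L^p}\lesssim\sum_{|j-j'|\le N_0}2^{(j'-j)(s-s_0)}b_jc_j$, and since $|j'-j|\le N_0$ the prefactor is bounded by a constant depending only on $s,s_0,N_0$. Taking the $\ell^r$ norm in $j'$ and using that each term is a finite ($N_0$-term) shifted sum, it suffices to bound $\|(b_jc_j)_j\|_{\ell^r}$. The index condition $\tfrac1r=\min\{1,\tfrac1{r_1}+\tfrac1{r_2}\}$ is handled by the discrete Hölder inequality: when $\tfrac1{r_1}+\tfrac1{r_2}\le 1$ this is the standard product estimate with $\tfrac1r=\tfrac1{r_1}+\tfrac1{r_2}$, while when $\tfrac1{r_1}+\tfrac1{r_2}\ge 1$ one takes $r=1$ and uses the embedding $\ell^{r_2}\hookrightarrow\ell^{r_1'}$ (valid precisely because $r_2\le r_1'$) to still obtain $\|(b_jc_j)\|_{\ell^1}\le\|(b_j)\|_{\ell^{r_1}}\|(c_j)\|_{\ell^{r_2}}$. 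Combining these inequalities yields $\|T_{hl}(f,g)\|_{\dot{B}^{s-s_0}_{p,r}}\lesssim\|f\|_{\dot{B}^s_{p,r_1}}\|g\|_{\dot{B}^{-s_0}_{\infty,r_2}}$, as claimed.

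The only genuine obstacle is the control of $\widetilde{S}_j g$ in $L^\infty$: everything else is the routine spectral-localization and Hölder machinery, but it is the summability furnished by $s_0>0$, turned into an $\ell^{r_2}$ bound via Young's inequality, that makes the negative-regularity factor $\dot{B}^{-s_0}_{\infty,r_2}$ affordable and produces the advertised regularity loss of $s_0$.
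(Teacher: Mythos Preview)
Your proof is correct and follows the standard spectral-localization and Young/H\"older argument from Bahouri--Chemin--Danchin, which is exactly the reference the paper cites for this lemma without giving its own proof. There is therefore nothing further to compare: your write-up is essentially the classical argument from that text.
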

\begin{lemma}
	Let $p_1,p_2,r_1,r_2 \in [1,\infty]$ and $s_1,s_2 \in \mathbb{R}$. Define $\frac{1}{p}=\frac{1}{p_1}+\frac{1}{p_2}\leq 1, \frac{1}{r}=\frac{1}{r_1}+\frac{1}{r_2}\leq 1$. Then, if $s_1+s_2>0$, we have
	\begin{align*}
		\|R(f,g)\|_{\dot{B}_{p,r}^{s_1+s_2}}
		\lesssim  \|f\|_{\dot{B}_{p_1,r_1}^{s_1}} \|g\|_{\dot{B}_{p_2,r_2}^{s_2}}.
	\end{align*}
\end{lemma}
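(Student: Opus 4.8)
The plan is to prove this remainder estimate by the standard Littlewood--Paley argument, exploiting the one structural feature that distinguishes the high--high interaction from the paraproducts: the Fourier support of each diagonal block is a \emph{ball} rather than an annulus. First I would group the sum by the lower index. Writing $k=j+\nu$ with $|\nu|\le 2$, set
$$R_j := \sum_{|\nu|\le 2}\dot{\Delta}_j f\,\dot{\Delta}_{j+\nu} g,\qquad R(f,g)=\sum_{j\in\mathbb{Z}} R_j.$$
Each factor $\dot{\Delta}_j f$ and $\dot{\Delta}_{j+\nu} g$ is frequency-localized in an annulus of size $\sim 2^j$, so their product has spectrum contained in the (Minkowski) sum of these annuli, hence in a ball $B(0,C2^j)$ for a fixed constant $C$. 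Consequently $\dot{\Delta}_{j'} R_j=0$ unless $2^{j'}\lesssim 2^j$, i.e.\ unless $j\ge j'-N_0$ for some fixed integer $N_0$. This one-sided localization is what will force the hypothesis $s_1+s_2>0$ and is the heart of the argument.

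Next I would estimate a single block. By H\"older's inequality with $\frac1p=\frac1{p_1}+\frac1{p_2}\le1$,
$$\|R_j\|_{L^p}\le \sum_{|\nu|\le2}\|\dot{\Delta}_j f\|_{L^{p_1}}\,\|\dot{\Delta}_{j+\nu} g\|_{L^{p_2}}.$$
Introduce the Besov sequences $c_j:=2^{js_1}\|\dot{\Delta}_j f\|_{L^{p_1}}$ and $d_k:=2^{ks_2}\|\dot{\Delta}_k g\|_{L^{p_2}}$, which satisfy $\|(c_j)\|_{\ell^{r_1}}=\|f\|_{\dot{B}^{s_1}_{p_1,r_1}}$ and $\|(d_k)\|_{\ell^{r_2}}=\|g\|_{\dot{B}^{s_2}_{p_2,r_2}}$. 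Since $|\nu|\le2$ the factor $2^{-\nu s_2}$ is a harmless constant, so
$$2^{j(s_1+s_2)}\|R_j\|_{L^p}\lesssim \sum_{|\nu|\le2}c_j\,d_{j+\nu}=:b_j,$$
and by the discrete H\"older/Young inequality with $\frac1r=\frac1{r_1}+\frac1{r_2}\le1$ the sequence $(b_j)$ obeys $\|(b_j)\|_{\ell^r}\lesssim\|f\|_{\dot{B}^{s_1}_{p_1,r_1}}\|g\|_{\dot{B}^{s_2}_{p_2,r_2}}$.

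Finally I would reassemble. Using $\dot{\Delta}_{j'}R(f,g)=\sum_{j\ge j'-N_0}\dot{\Delta}_{j'}R_j$ together with the uniform boundedness of $\dot{\Delta}_{j'}$ on $L^p$,
$$2^{j'(s_1+s_2)}\|\dot{\Delta}_{j'}R(f,g)\|_{L^p}\lesssim \sum_{j\ge j'-N_0} 2^{(j'-j)(s_1+s_2)}\,b_j.$$
Writing $\sigma:=s_1+s_2$, the kernel $a_m:=2^{m\sigma}\mathbf{1}_{\{m\le N_0\}}$ lies in $\ell^1(\mathbb{Z})$ \emph{precisely because} $\sigma>0$, the geometric series $\sum_{m\le N_0}2^{m\sigma}$ being summable only in that regime. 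The right-hand side is then the convolution $(a*b)_{j'}$, so Young's inequality $\ell^1*\ell^r\to\ell^r$ yields
$$\|R(f,g)\|_{\dot{B}^{s_1+s_2}_{p,r}}=\Big\|\Big(2^{j'\sigma}\|\dot{\Delta}_{j'}R(f,g)\|_{L^p}\Big)\Big\|_{\ell^r_{j'}}\lesssim\|a\|_{\ell^1}\,\|(b_j)\|_{\ell^r}\lesssim\|f\|_{\dot{B}^{s_1}_{p_1,r_1}}\|g\|_{\dot{B}^{s_2}_{p_2,r_2}},$$
which is the claim. The only genuinely delicate point is this last step: unlike the paraproduct $T_{hl}$ of Lemma \ref{T_hl}, whose blocks are annulus-supported and hence localize $j$ near $j'$ with no sign condition, the ball-supported remainder produces a one-sided sum whose convergence hinges entirely on $s_1+s_2>0$. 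This is where I expect the main difficulty, and it explains why the positivity hypothesis is sharp.
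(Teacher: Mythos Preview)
Your argument is correct and is precisely the standard proof of this remainder estimate. Note, however, that the paper does not supply its own proof of this lemma: it is stated as a classical fact drawn from \cite{99Bahouri} (Bahouri--Chemin--Danchin), and your write-up is essentially the argument one finds there.
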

\begin{lemma}
	For any $s>0$ and $p,r \in [1,\infty]$, we have
	\begin{align*}
		\|fg\|_{\dot{B}_{p,r}^s} \lesssim \|f\|_{L^\infty} \|g\|_{\dot{B}_{p,r}^s} + \|g\|_{L^\infty} \|f\|_{\dot{B}_{p,r}^s}.
	\end{align*}
\end{lemma}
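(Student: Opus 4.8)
The plan is to establish the estimate directly from Bony's paraproduct decomposition of Definition \ref{Bony}, namely $fg=T_{hl}(f,g)+T_{lh}(f,g)+R(f,g)$, and to bound each of the three pieces separately in $\dot B^s_{p,r}$. The two paraproducts will each be controlled with one factor measured in $L^\infty$ and the other in $\dot B^s_{p,r}$, with no restriction on $s$; this is the borderline $s_0=0$ analogue of Lemma \ref{T_hl} (which I handle directly, since Lemma \ref{T_hl} requires a strict loss $s_0>0$). The genuine use of the hypothesis $s>0$ will be confined to the remainder term $R(f,g)$.

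First I would treat the low--high paraproduct, which by Definition \ref{Bony} can be written as $T_{lh}(f,g)=\sum_{k}\big(\sum_{j'\leq k-3}\dot\Delta_{j'}f\big)\dot\Delta_k g$. Each summand has Fourier support in a dyadic annulus of size $2^k$, so for a fixed integer $N$ (depending only on the support of $\varphi$) one has $\dot\Delta_j T_{lh}(f,g)=\sum_{|k-j|\leq N}\dot\Delta_j\big[(\sum_{j'\leq k-3}\dot\Delta_{j'}f)\dot\Delta_k g\big]$. Using the uniform bound $\|\sum_{j'\leq k-3}\dot\Delta_{j'}f\|_{L^\infty}\lesssim\|f\|_{L^\infty}$ (the homogeneous low-frequency truncation being convolution against a kernel of $L^1$-norm bounded independently of $k$), together with the $L^p$-boundedness of $\dot\Delta_j$ and H\"older, I get $\|\dot\Delta_j T_{lh}(f,g)\|_{L^p}\lesssim\|f\|_{L^\infty}\sum_{|k-j|\leq N}\|\dot\Delta_k g\|_{L^p}$. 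Multiplying by $2^{js}$ and taking the $\ell^r(\mathbb Z)$ norm --- the inner sum having boundedly many terms with $2^{js}\sim 2^{ks}$ --- yields $\|T_{lh}(f,g)\|_{\dot B^s_{p,r}}\lesssim\|f\|_{L^\infty}\|g\|_{\dot B^s_{p,r}}$. The same computation applied to $T_{hl}(f,g)=\sum_{j}\dot\Delta_j f\big(\sum_{k'\leq j-3}\dot\Delta_{k'}g\big)$, in which $g$ now carries the $L^\infty$ bound, gives $\|T_{hl}(f,g)\|_{\dot B^s_{p,r}}\lesssim\|g\|_{L^\infty}\|f\|_{\dot B^s_{p,r}}$.

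For the remainder $R(f,g)=\sum_k \dot\Delta_k f\,\widetilde{\dot\Delta}_k g$, with $\widetilde{\dot\Delta}_k=\sum_{|k'-k|\leq 2}\dot\Delta_{k'}$, the key structural difference is that each summand is supported in a ball of radius $\sim 2^k$ rather than an annulus, so only indices $k\geq j-N$ contribute to $\dot\Delta_j R(f,g)$. Bounding $\|\dot\Delta_k f\|_{L^\infty}\lesssim\|f\|_{L^\infty}$ gives $2^{js}\|\dot\Delta_j R(f,g)\|_{L^p}\lesssim\|f\|_{L^\infty}\sum_{k\geq j-N}2^{(j-k)s}\big(2^{ks}\|\widetilde{\dot\Delta}_k g\|_{L^p}\big)$. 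This is precisely where $s>0$ is used: the one-sided weight sequence $\big(2^{ms}\mathbf 1_{m\leq N}\big)_m$ is summable exactly when $s>0$, so discrete Young's inequality turns the convolution into $\|f\|_{L^\infty}\|g\|_{\dot B^s_{p,r}}$, giving $\|R(f,g)\|_{\dot B^s_{p,r}}\lesssim\|f\|_{L^\infty}\|g\|_{\dot B^s_{p,r}}$.

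Adding the three bounds and distributing the $L^\infty$ factor between $f$ and $g$ produces the claimed tame estimate. The only real obstacle is the remainder term: whereas the annular spectral localization of the two paraproducts makes their frequency sums essentially diagonal and insensitive to the sign of $s$, the resonant part $R(f,g)$ accumulates contributions from all high frequencies $k\geq j$, and the geometric series $\sum_{k\geq j}2^{(j-k)s}$ diverges once $s\leq 0$. This is the structural reason the hypothesis $s>0$ is indispensable and is used exactly at this step.
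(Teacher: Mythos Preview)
Your argument is correct and is precisely the standard paraproduct proof of this tame product estimate. Note, however, that the paper does not supply its own proof of this lemma: it is stated without proof and attributed to the reference \cite{99Bahouri} (Bahouri--Chemin--Danchin), where the argument you wrote is the one given. So there is nothing to compare against beyond observing that your proof matches the cited source.
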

Now we start to prove some interpolation inequalities which will be used several times in the proof of the main theorems.
\begin{lemma}\label{1}
	For any $\alpha\in [0,1]$, it holds that
	\begin{align*}
		\|\nabla f\|_{L^3(\mathbb{R}^3)}\lesssim\|f\|_{\dot{B}^{-\alpha}_{\infty,\infty}(\mathbb{R}^3)}^{\frac{1}{3}}\|\nabla f\|_{L^2(\mathbb{R}^3)}^{\frac{1-\alpha}{3}}\|\nabla^2f\|^{\frac{1+\alpha}{3}}_{L^2(\mathbb{R}^3)}.
	\end{align*}
\end{lemma}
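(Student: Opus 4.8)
The plan is to obtain the inequality by a Littlewood--Paley decomposition, splitting $\nabla f$ into low and high frequencies at a parameter $N\in\mathbb{Z}$ to be optimized at the end. For the low-frequency part $\sum_{j\leq N}\dot{\Delta}_j\nabla f$, I would use Bernstein's inequality to pass from $L^3$ to $L^\infty$ (paying $2^{3j/2}$ in three dimensions), and then absorb the derivative into the Besov norm: since $\dot{\Delta}_j\nabla f$ is spectrally localized, $\|\dot{\Delta}_j\nabla f\|_{L^\infty}\lesssim 2^{j(1+\alpha)}\|\dot{\Delta}_j f\|_{L^\infty}\cdot 2^{-j\alpha}\lesssim 2^{j(1+\alpha)}\|f\|_{\dot B^{-\alpha}_{\infty,\infty}}$, so summing the geometric series over $j\leq N$ gives a bound of order $2^{N(1+\alpha)}\|f\|_{\dot B^{-\alpha}_{\infty,\infty}}$ for $\|\text{low}\|_{L^\infty}$; combined with Bernstein's $L^3\hookrightarrow$ estimate (or rather estimating $\|\text{low}\|_{L^3}$ directly by summing $\|\dot\Delta_j\nabla f\|_{L^3}$ and interpolating) one controls the low part.

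For the high-frequency part $\sum_{j> N}\dot{\Delta}_j\nabla f$, I would interpolate between $\|\nabla f\|_{L^2}$ and $\|\nabla^2 f\|_{L^2}$: writing $\|\dot\Delta_j\nabla f\|_{L^3}\lesssim 2^{j/2}\|\dot\Delta_j\nabla f\|_{L^2}$ by Bernstein (gaining $2^{j d(1/2-1/3)}=2^{j/2}$ in $d=3$), and then summing $\sum_{j>N}2^{j/2}\|\dot\Delta_j\nabla f\|_{L^2}$, which is dominated by $2^{-N/2}\|\nabla^2 f\|_{L^2}$ after noting $2^{j/2}\|\dot\Delta_j\nabla f\|_{L^2}=2^{-j/2}\cdot 2^{j}\|\dot\Delta_j\nabla f\|_{L^2}\lesssim 2^{-j/2}\|\dot\Delta_j\nabla^2 f\|_{L^2}$ and summing the geometric tail. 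This gives $\|\text{high}\|_{L^3}\lesssim 2^{-N/2}\|\nabla^2 f\|_{L^2}$.

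Adding the two estimates yields $\|\nabla f\|_{L^3}\lesssim 2^{N(1+\alpha)}A + 2^{-N/2}B$ for appropriate powers — but I should be careful: to get the stated three-factor product with exponents $\tfrac13,\tfrac{1-\alpha}{3},\tfrac{1+\alpha}{3}$ summing to $1$, the low part should actually be estimated with a mixed bound involving both $\|f\|_{\dot B^{-\alpha}_{\infty,\infty}}$ and $\|\nabla f\|_{L^2}$. The cleaner route is a two-step interpolation: first establish $\|\nabla f\|_{L^3}\lesssim \|\nabla f\|_{L^2}^{1/2}\|\nabla^2 f\|_{L^2}^{1/2}$ (standard Gagliardo--Nirenberg in $d=3$, itself via the above dyadic splitting with the low part handled by $\|\nabla f\|_{L^2}$), and separately, by the same splitting technique, $\|\nabla f\|_{L^3}\lesssim \|f\|_{\dot B^{-\alpha}_{\infty,\infty}}^{\beta}\|\nabla^2 f\|_{L^2}^{1-\beta}$ for a suitable $\beta$ determined by scaling; then combine. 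Actually the most efficient is to optimize $N$ directly in a bound of the form $\|\nabla f\|_{L^3}\lesssim 2^{N(1+\alpha)}\|f\|_{\dot B^{-\alpha}_{\infty,\infty}} + 2^{-N/2}\|\nabla^2 f\|_{L^2}$, which after choosing $2^{N}\sim(\|\nabla^2 f\|_{L^2}/\|f\|_{\dot B^{-\alpha}_{\infty,\infty}})^{2/(3+2\alpha)}$ gives $\|\nabla f\|_{L^3}\lesssim \|f\|_{\dot B^{-\alpha}_{\infty,\infty}}^{1/(3+2\alpha)}\|\nabla^2 f\|_{L^2}^{(2+2\alpha)/(3+2\alpha)}$, and then interpolating this against the Gagliardo--Nirenberg bound $\|\nabla f\|_{L^3}\lesssim\|\nabla f\|_{L^2}^{1/2}\|\nabla^2 f\|_{L^2}^{1/2}$ with the right weights produces exactly the claimed exponents $\tfrac13$, $\tfrac{1-\alpha}{3}$, $\tfrac{1+\alpha}{3}$ (one checks the arithmetic: the $\|f\|_{\dot B^{-\alpha}}$ exponent must be $\tfrac13$, forcing the interpolation weight, and the remaining exponents on $\|\nabla f\|_{L^2}$, $\|\nabla^2 f\|_{L^2}$ then fall out and sum correctly).

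The main obstacle is purely bookkeeping: getting the frequency-cutoff optimization to land on the precise exponents in the statement, and ensuring the geometric series in the low-frequency sum actually converges — which it does, since the exponent $1+\alpha>0$ for all $\alpha\in[0,1]$, and in the high-frequency sum the exponent $-\tfrac12<0$ guarantees convergence of the tail. No delicate estimate is needed; Bernstein's inequalities and summation of geometric series suffice, so the proof is short once the exponents are pinned down by scaling considerations.
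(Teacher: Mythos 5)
There is a genuine gap, and it occurs exactly at the point you flagged but then stepped over. Your low-frequency estimate controls $\bigl\|\sum_{j\leq N}\dot{\Delta}_j\nabla f\bigr\|_{L^\infty}$ by $2^{N(1+\alpha)}\|f\|_{\dot B^{-\alpha}_{\infty,\infty}}$, but on $\mathbb{R}^3$ an $L^\infty$ bound does not control an $L^3$ norm, and Bernstein only moves you from lower to higher Lebesgue exponents on a dyadic block. Consequently the two-term bound $\|\nabla f\|_{L^3}\lesssim 2^{N(1+\alpha)}\|f\|_{\dot B^{-\alpha}_{\infty,\infty}}+2^{-N/2}\|\nabla^2 f\|_{L^2}$ and the optimized inequality $\|\nabla f\|_{L^3}\lesssim\|f\|_{\dot B^{-\alpha}_{\infty,\infty}}^{1/(3+2\alpha)}\|\nabla^2 f\|_{L^2}^{(2+2\alpha)/(3+2\alpha)}$ are false: under $f\mapsto f(\lambda\cdot)$ the left side $\|\nabla f\|_{L^3(\mathbb{R}^3)}$ is scale-invariant while your right side scales like $\lambda^{1/(3+2\alpha)}$, so the inequality fails for $\lambda$ small. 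Moreover, even granting that inequality, the final step does not close: to recover the exponent $\tfrac13$ on $\|f\|_{\dot B^{-\alpha}_{\infty,\infty}}$ by combining it with $\|\nabla f\|_{L^3}\lesssim\|\nabla f\|_{L^2}^{1/2}\|\nabla^2 f\|_{L^2}^{1/2}$ you would need the interpolation weight $\theta=(3+2\alpha)/3>1$, which is not a convex combination, so the exponents do not ``fall out.''

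The repair is the one you half-named and abandoned: interpolate $L^3$ between $L^2$ and $L^\infty$ \emph{on each dyadic block}, $\|\dot{\Delta}_j\nabla f\|_{L^3}\leq\|\dot{\Delta}_j\nabla f\|_{L^2}^{2/3}\|\dot{\Delta}_j\nabla f\|_{L^\infty}^{1/3}$, then use $\|\dot{\Delta}_j\nabla f\|_{L^\infty}\lesssim 2^{j(1+\alpha)}\|f\|_{\dot B^{-\alpha}_{\infty,\infty}}$ together with $\|\dot{\Delta}_j\nabla f\|_{L^2}\leq\min\bigl(\|\nabla f\|_{L^2},\,2^{-j}\|\nabla^2 f\|_{L^2}\bigr)$, sum the two geometric tails and optimize the cutoff at $2^N\sim\|\nabla^2 f\|_{L^2}/\|\nabla f\|_{L^2}$; this lands exactly on the exponents $\tfrac13,\tfrac{1-\alpha}{3},\tfrac{1+\alpha}{3}$ (with the usual extra care at $\alpha=1$, where the tail is no longer geometric and one should sum in $\ell^2$). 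The paper itself bypasses the dyadic bookkeeping entirely: it quotes the refined Gagliardo--Nirenberg (G\'erard--Meyer--Oru) inequality $\|g\|_{L^3}\lesssim\|g\|_{\dot B^{-1-\alpha}_{\infty,\infty}}^{1/3}\|g\|_{\dot H^{(1+\alpha)/2}}^{2/3}$ for $g=\nabla f$ and then interpolates $\dot H^{(1+\alpha)/2}$ between $L^2$ and $\dot H^1$; your blockwise splitting, once corrected as above, is essentially a proof of that quoted inequality.
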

\begin{proof}
	Using interpolation inequality we have
	\begin{align*}
		\|\nabla f\|_{L^3(\mathbb{R}^3)}\lesssim \|\nabla f\|_{\dot{B}^{-1-\alpha}_{\infty,\infty}(\mathbb{R}^3)}^{\frac{1}{3}}\|\nabla f\|_{\dot{B}^{\frac{1+\alpha}{2}}_{2,2}(\mathbb{R}^3)}^{\frac{2}{3}}.
	\end{align*}
	Note that $\|\nabla f\|_{\dot{B}^{-1-\alpha}_{\infty,\infty}(\mathbb{R}^3)}\lesssim \|f\|_{\dot{B}^{-\alpha}_{\infty,\infty}(\mathbb{R}^3)}$ and $\|f\|_{\dot{B}^{s}_{2,2}(\mathbb{R}^3)}\sim\|f\|_{\dot{H}^s(\mathbb{R}^3)}$, thus
	\begin{align*}
		\|\nabla f\|_{L^3(\mathbb{R}^3)}\lesssim \| f\|_{\dot{B}^{-\alpha}_{\infty,\infty}(\mathbb{R}^3)}^{\frac{1}{3}}\|\nabla f\|_{\dot{H}^{\frac{1+\alpha}{2}}(\mathbb{R}^3)}^{\frac{2}{3}}\lesssim \| f\|_{\dot{B}^{-\alpha}_{\infty,\infty}(\mathbb{R}^3)}^{\frac{1}{3}}\|\nabla f\|_{L^2(\mathbb{R}^3)}^{\frac{1-\alpha}{3}}\|\nabla^2f\|^{\frac{1+\alpha}{3}}_{L^2(\mathbb{R}^3)}.
	\end{align*}
\end{proof}
\begin{lemma}\label{Ltqr}
	For $1\leq p\leq q\leq r\leq \infty$ and $d\in\mathbb{N}$, it holds that
	\begin{align}\label{Ltz}
		\|g\|_{L^q(\mathbb{R}^d)}^q\leq \|g\|_{L^p(\mathbb{R}^d)}^p+\|g\|_{L^{q,r}(\mathbb{R}^d)}^q\left(\log(1+ \|g\|_{L^{\infty}(\mathbb{R}^d)})\right)^{1-\frac{q}{r}},
		\\\label{Ltz2}
		\|g\|_{L^q(\mathbb{R}^d)}^q \leq 
		1+\log \left[1+\|g\|_{L^\infty(\mathbb{R}^d)} \|g\|_{L^p(\mathbb{R}^d)}^{\frac{p}{q-p}}
		\right]^{1-\frac{q}{r}} \|g\|_{L^{q,r}(\mathbb{R}^d)}^q.
	\end{align}
\end{lemma}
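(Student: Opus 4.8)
The plan is to prove \eqref{Ltz} directly from a layer-cake (distribution-function) decomposition split at the level $1$, and then obtain \eqref{Ltz2} from \eqref{Ltz} by a scaling/optimisation argument. Writing $d_g(\lambda):=|\{x:|g(x)|>\lambda\}|$, I would first split $\|g\|_{L^q}^q=\int_{\{|g|\le1\}}|g|^q\,dx+\int_{\{|g|>1\}}|g|^q\,dx$. On $\{|g|\le1\}$ the hypothesis $q\ge p$ gives $|g|^q\le|g|^p$, while on $\{|g|>1\}$ we have $1\le|g|^p$; hence the contribution of $\int_{\{|g|\le1\}}|g|^q$ together with the "flat" piece $|\{|g|>1\}|$ that appears from the layer-cake formula is absorbed cleanly into $\|g\|_{L^p}^p$. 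Using $|g|^q-1=q\int_1^{|g|}\lambda^{q-1}\,d\lambda$ on $\{|g|>1\}$ and Fubini, the remaining term is $q\int_1^{\|g\|_{L^\infty}}\lambda^{q-1}d_g(\lambda)\,d\lambda$ (interpreted as $0$ when $\|g\|_{L^\infty}\le1$), and it remains to bound this by $\|g\|_{L^{q,r}}^q(\log(1+\|g\|_{L^\infty}))^{1-q/r}$.

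For that bound, assume $q<r$ (the case $q=r$ is trivial since $\|g\|_{L^{q,q}}=\|g\|_{L^q}$ and the logarithmic power is then $0$; $r=\infty$ is a simple degeneration). I would factor the integrand as $\lambda^{q-1}d_g(\lambda)=\big(\lambda^{r-1}d_g(\lambda)^{r/q}\big)^{q/r}\,\lambda^{-(1-q/r)}$ and apply Hölder's inequality on $[1,\|g\|_{L^\infty}]$ with conjugate exponents $r/q$ and $r/(r-q)$. The weight $\lambda^{-(1-q/r)}$ is chosen precisely so that in the second Hölder factor the power of $\lambda$ collapses to $-1$, producing $\big(\int_1^{\|g\|_{L^\infty}}\lambda^{-1}\,d\lambda\big)^{1-q/r}=(\log\|g\|_{L^\infty})^{1-q/r}$, while the first Hölder factor is, by the definition of the Lorentz norm, a tail of $\|g\|_{L^{q,r}}^r$. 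Bounding $\log\|g\|_{L^\infty}\le\log(1+\|g\|_{L^\infty})$ then gives \eqref{Ltz}.

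To deduce \eqref{Ltz2}, observe that \eqref{Ltz} holds for every measurable function, so I would apply it to $g/\mu$ and multiply through by $\mu^q$ to get $\|g\|_{L^q}^q\le\mu^{q-p}\|g\|_{L^p}^p+\|g\|_{L^{q,r}}^q\big(\log(1+\mu^{-1}\|g\|_{L^\infty})\big)^{1-q/r}$ for every $\mu>0$. Choosing $\mu=\|g\|_{L^p}^{-p/(q-p)}$ (legitimate for $p<q$ and $0<\|g\|_{L^p}<\infty$, with the cases $\|g\|_{L^p}\in\{0,\infty\}$ immediate) normalises the first term to $1$ and replaces $\|g\|_{L^\infty}$ inside the logarithm by $\|g\|_{L^\infty}\|g\|_{L^p}^{p/(q-p)}$, which is exactly \eqref{Ltz2}.

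I expect the main obstacle to be the Hölder interpolation in the radial variable: one must choose the splitting level (here $1$) and the powers so that (i) the sub-unit part and the measure $|\{|g|>1\}|$ fold exactly into $\|g\|_{L^p}^p$, and (ii) the residual $\lambda$-integral recombines into $\|g\|_{L^{q,r}}^q$ with precisely the exponent $1-q/r$ on the logarithm. One should also keep careful track of the normalisation constant in the definition of $\|\cdot\|_{L^{q,r}}$ and of the endpoint cases $q=r$ and $r=\infty$, where the Hölder step degenerates.
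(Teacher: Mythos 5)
Your proposal is correct and follows essentially the same route as the paper: a layer-cake decomposition split at a threshold level, with the sub-threshold part absorbed into $\|g\|_{L^p}^p$ via $q\ge p$ and the remaining $\lambda$-integral handled by H\"older with exponents $r/q$ and $r/(r-q)$ to produce the $\bigl(\log(1+\|g\|_{L^\infty})\bigr)^{1-q/r}$ factor. Your derivation of \eqref{Ltz2} by rescaling $g\mapsto g/\mu$ and optimising $\mu=\|g\|_{L^p}^{-p/(q-p)}$ is just a repackaging of the paper's direct choice of cutoffs $\lambda_2=\|g\|_{L^p}^{-p/(q-p)}$ and $\lambda_1=\|g\|_{L^\infty}+\lambda_2$, so the two arguments coincide up to the (harmless, since the lemma is only used up to multiplicative constants) normalisation factors of $p$, $q$ in the layer-cake and Lorentz-norm definitions that you rightly flag.
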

\begin{proof}
	To establish \eqref{Ltz} and \eqref{Ltz2}, we start by decomposing $\|g\|_{L^q(\mathbb{R}^d)}^q$ into three integrals:
	\begin{align*}
		\|g\|_{L^q(\mathbb{R}^d)}^q&\leq \int_{0}^{\lambda_2}\lambda^q|\{|g|>\lambda\}|\frac{d\lambda}{\lambda}
		+\int_{\lambda_2}^{\lambda_1}\lambda^q|\{|g|>\lambda\}|\frac{d\lambda}{\lambda}
		+\int_{\lambda_1}^{\infty}\lambda^q|\{|g|>\lambda\}|\frac{d\lambda}{\lambda}\\
		&\leq \lambda_2^{q-p} \int_{0}^{\lambda_2}\lambda^p|\{|g|>\lambda\}|\frac{d\lambda}{\lambda}
		+\left(\int_{\lambda_2}^{\lambda_1}\frac{d\lambda}{\lambda}\right)^{1-\frac{q}{r}}\left(\int_{\lambda_2}^{\lambda_1}\lambda^r|\{|g|>\lambda\}|^{\frac{r}{q}}\frac{d\lambda}{\lambda}\right)^{\frac{q}{r}}
		+\int_{\lambda_1}^{\infty}\lambda^q|\{|g|>\lambda\}|\frac{d\lambda}{\lambda}\\
		&\leq \lambda_2^{q-p} \|g\|_{L^p(\mathbb{R}^d)}^p
		+\left[\log\lambda_1-\log\lambda_2\right]^{1-\frac{q}{r}}\|g\|_{L^{q,r}(\mathbb{R}^d)}^q
		+\int_{\lambda_1}^{\infty}\lambda^q\left|\{|g|>\lambda\}\right|\frac{d\lambda}{\lambda}.
	\end{align*}
	Taking $\lambda_1=1+\|g\|_{L^\infty(\mathbb{R}^d)},\lambda_2=1$, then we obtain \eqref{Ltz}
	\begin{align*}
		\|g\|^q_{L^q(\mathbb{R}^d)}\leq \|g\|_{L^p(\mathbb{R}^d)}^{p}+\|g\|^q_{L^{q,r}(\mathbb{R}^d)}\left(\log(1+ \|g\|_{L^{\infty}(\mathbb{R}^d)})\right)^{1-\frac{q}{r}}.
	\end{align*}
	Taking $\lambda_1=\|g\|_{L^\infty(\mathbb{R}^d)}+\|g\|_{L^p(\mathbb{R}^d)}^{-\frac{p}{q-p}},\lambda_2=\|g\|_{L^p(\mathbb{R}^d)}^{-\frac{p}{q-p}}$, we get the estimate in \eqref{Ltz2}
	\begin{align*}
		\|g\|_{L^q(\mathbb{R}^d)}^q \leq 
		1+\log \left[1+\|g\|_{L^\infty(\mathbb{R}^d)} \|g\|_{L^p(\mathbb{R}^d)}^{\frac{p}{q-p}}
		\right]^{1-\frac{q}{r}} \|g\|_{L^{q,r}(\mathbb{R}^d)}^q,
	\end{align*}
	which completes the proof of \eqref{Ltz2}.
\end{proof}\\
The inequalities \eqref{Ltz} and \eqref{Ltz2} provide crucial estimates for the behavior when we give the prior estimate of Lorentz spaces.

\section{Proof of Theorem \ref{LpB}, \ref{LprB} and \ref{endpoint case}}


\begin{proof}[Proof of Theorem  \ref{LpB}]
	Multiply $\Delta u$ on \eqref{NS} and integral over $dx$ we have
	\begin{align*}
		\frac{1}{2}\partial_t\|\nabla u(t)\|^2_{L^2(\mathbb{R}^3)}+\|\Delta u(t)\|^2_{L^2(\mathbb{R}^3)}&=\int u\cdot\nabla u \Delta u dx=\sum_{i,j,k}\int \partial_{kk}u^i u^j\partial_ju^i dx\\
		&=-\sum_{i,j,k}\int \partial_{jk}u^i u^j\partial_ku^i dx-\sum_{i,j,k}\int \partial_{j}u^i \partial_ku^j\partial_ku^i dx\\
		&=-\sum_{i,j,k}\int \partial_{j}u^i \partial_ku^j\partial_ku^i dx,
	\end{align*}
	
	where in the last line we use $\nabla\cdot u=0$  to obtain the fact that
	\begin{align*}
		\sum_{i,j,k}\int \partial_{jk}u^i u^j\partial_ku^i dx=-\sum_{i,j,k}\int \partial_{jk}u^i u^j\partial_ku^i dx.
	\end{align*}
	Thus we have 
	\begin{align}\label{NS ineq}
		\partial_t\|\nabla u(t)\|^2_{L^2(\mathbb{R}^3)}+\|\Delta u(t)\|^2_{L^2(\mathbb{R}^3)}\leq C \|\nabla u\|_{L^3(\mathbb{R}^3)}^3.
	\end{align}
	Combining this with Lemma \ref{1} and $\|\nabla^2 u\|_{L^2(\mathbb{R}^3)}\leq C\|\Delta u\|_{L^2(\mathbb{R}^3)}$, we obtain that
	\begin{align}\label{NS ineq2}
		\partial_t\|\nabla u(t)\|^2_{L^2(\mathbb{R}^3)}+\|\nabla^2 u(t)\|^2_{L^2(\mathbb{R}^3)}
		&\leq C \|u\|_{\dot{B}^{-\alpha}_{\infty,\infty}(\mathbb{R}^3)}\|\nabla u\|_{L^2}^{1-\alpha}\|\nabla^2u\|^{1+\alpha}_{L^2(\mathbb{R}^3)}\\\nonumber
		&\leq \frac{1}{2}\|\nabla^2 u\|_{L^2}^2+C\|u\|_{\dot{B}^{-\alpha}_{\infty,\infty}(\mathbb{R}^3)}^{\frac{2}{1-\alpha}}\|\nabla u\|^{2}_{L^2(\mathbb{R}^3)}.
	\end{align}
	Absorbing $\frac{1}{2}\|\nabla^2 u\|_{L^2}^2$ by the left hand side and using Gr\"{o}nwall inequality, we obtain that 
	\begin{align*}
		\|\nabla u\|_{L_T^\infty L^2(\mathbb{R}^3)}+\|\nabla^2 u\|_{L_T^2 L^2(\mathbb{R}^3)}\leq \|\nabla u_0\|_{L^2(\mathbb{R}^3)}\exp\left(C\int_{0}^{T}\|u(s)\|_{\dot{B}^{-\alpha}_{\infty,\infty}(\mathbb{R}^3)}^{\frac{2}{1-\alpha}}ds\right),
	\end{align*}
	where $C$ is a constant depending on  $\alpha$.
\end{proof}\\

\begin{proof}[Proof of Theorem  \ref{LprB}]
	Combining \eqref{Lt} and Lemma \ref{Ltqr} we have
	\begin{align*}
		\|\nabla u\|_{L^\infty_TL^2(\mathbb{R}^3)}+\|\nabla^2 u\|_{L^2_TL^2 (\mathbb{R}^3)}
		&\leq \|\nabla u_0\|_{L^2(\mathbb{R}^3)}
		\exp\left(C\|u\|^{\frac{4}{3-2\alpha}}_{L^{\frac{4}{{3}-2\alpha}}_T\dot{B}^{-\alpha}_{\infty,\infty}(\mathbb{R}^3)}\right)\cdot\\
		&\qquad\cdot \exp\left(C\|u\|_{L^{\frac{2}{1-\alpha},r}_T\dot{B}^{-\alpha}_{\infty,\infty}(\mathbb{R}^3)}^{\frac{2}{1-\alpha}}
		(
		\log(1+\|u\|_{L_T^\infty\dot{B}^{-\alpha}_{\infty,\infty}(\mathbb{R}^3)})
		)^{1-\frac{2}{(1-\alpha)r}}
		\right)
	\end{align*}
	By Gargliardo-Nirenberg interpolation inequality and Young's inequality, we have
	\begin{align*}
		\|u\|_{L_T^{\frac{4}{{3}-2\alpha}}\dot{B}^{-\alpha}_{\infty,\infty}(\mathbb{R}^3)}
		&\lesssim \left(\int_{0}^{T} \left(\|u\|_{\dot{B}^{0}_{2,2}(\mathbb{R}^3)}^{\alpha-\frac{1}{2}}
		\|u\|_{\dot{B}^{1}_{2,2}(\mathbb{R}^3)}^{\frac{3}{2}-\alpha}\right)^{{\frac{4}{{3}-2\alpha}}}dt\right)^{\frac{{3}-2\alpha}{4}}\\
		&\lesssim \|u\|_{L_T^\infty\dot{B}^{0}_{2,2}(\mathbb{R}^3)}^{\alpha-\frac{1}{2}}
		\|u\|_{L_T^2\dot{B}^{1}_{2,2}(\mathbb{R}^3)}^{\frac{3}{2}-\alpha}\lesssim \|u\|_{L_T^\infty\dot{B}^{0}_{2,2}(\mathbb{R}^3)}
		+\|u\|_{L_T^2\dot{B}^{1}_{2,2}(\mathbb{R}^3)}.
	\end{align*}
	Multiply $ u$ on \eqref{NS} and integral over $dx$ we have $L^2$ energy inequality
	\begin{align*}
		\|u(t,\cdot)\|_{L^2(\mathbb{R}^3)}^2+2 \int_{0}^{t}\|\nabla u(s,\cdot)\|_{L^2(\mathbb{R}^3)}^2 ds\leq\|u_0\|_{L^2(\mathbb{R}^3)}^2.
	\end{align*}
	Therefore, we obtain that
	\begin{align*}
		\|u\|_{L_T^{\frac{4}{{3}-2\alpha}}\dot{B}^{-\alpha}_{\infty,\infty}(\mathbb{R}^3)}
		&\lesssim \|u_0\|_{L^2(\mathbb{R}^3)},
	\end{align*}
	which implies that  
	\begin{align}
		\exp\left(C\|u\|^{\frac{4}{3-2\alpha}}_{L_T^{\frac{4}{{3}-2\alpha}}\dot{B}^{-\alpha}_{\infty,\infty}(\mathbb{R}^3)}\right)\leq \exp\left(C\|u_0\|^{\frac{4}{3-2\alpha}}_{L^2(\mathbb{R}^3)}\right).
	\end{align}
	Note that
	\begin{align*}
		\|u\|_{L_T^\infty\dot{B}^{-\alpha}_{\infty,\infty}(\mathbb{R}^3)}\lesssim \|u\|	^{\frac{3}{2}-\alpha}_{L_T^\infty\dot{H}^{1}(\mathbb{R}^3)}\|u\|	^{\alpha-\frac{1}{2}}_{L_T^\infty{L}^{2}(\mathbb{R}^3)}.
	\end{align*}
	Thus, applying Young's inequality, we obtain
	{\small	\begin{align*}
			&\exp\left(C\|u\|_{L_T^{\frac{2}{1-\alpha},r}\dot{B}^{-\alpha}_{\infty,\infty}(\mathbb{R}^3)}^{\frac{2}{1-\alpha}}
			\left(
			\log\left(1+\|u\|_{L_T^\infty\dot{B}^{-\alpha}_{\infty,\infty}(\mathbb{R}^3)}\right)
			\right)^{1-\frac{2}{(1-\alpha)r}}
			\right)\\
			&\quad\leq\exp\left(C\|u\|_{L_T^{\frac{2}{1-\alpha},r}\dot{B}^{-\alpha}_{\infty,\infty}(\mathbb{R}^3)}^{r}\right)
			\exp
			\left(\log\left(1+\|u\|_{L_T^\infty\dot{B}^{-\alpha}_{\infty,\infty}(\mathbb{R}^3)}\right)\right)
			\\
			&\quad\leq \exp\left(C\|u\|_{L_T^{\frac{2}{1-\alpha},r}\dot{B}^{-\alpha}_{\infty,\infty}(\mathbb{R}^3)}^{r}\right)
			\left(1+\|u\|	^{\frac{3}{2}-\alpha}_{L_T^\infty\dot{H}^{1}(\mathbb{R}^3)}\|u\|	^{\alpha-\frac{1}{2}}_{L_T^\infty{L}^{2}(\mathbb{R}^3)}\right).
	\end{align*}}
	Therefore,
	\begin{align*}
		&	\|\nabla u\|_{L^\infty_TL^2(\mathbb{R}^3)}+\|\nabla^2 u\|_{L^2_TL^2(\mathbb{R}^3)}\\
		&\quad	\leq\|\nabla u_0\|_{L^2(\mathbb{R}^3)}\exp\left(C\|u_0\|^{\frac{4}{3-2\alpha}}_{L^2(\mathbb{R}^3)}\right)\exp\left(C\|u\|_{L_T^{\frac{2}{1-\alpha},r}\dot{B}^{-\alpha}_{\infty,\infty}(\mathbb{R}^3)}^{r}\right)
		\left(1+\|u\|	^{\frac{3}{2}-\alpha}_{L_T^\infty\dot{H}^{1}(\mathbb{R}^3)}\|u\|	^{\alpha-\frac{1}{2}}_{L_T^\infty{L}^{2}(\mathbb{R}^3)}\right)\\
		&\quad	\leq\|\nabla u_0\|_{L^2(\mathbb{R}^3)}\exp\left(C\|u_0\|^{\frac{4}{3-2\alpha}}_{L^2(\mathbb{R}^3)}\right)\exp\left(C\|u\|_{L_T^{\frac{2}{1-\alpha},r}\dot{B}^{-\alpha}_{\infty,\infty}(\mathbb{R}^3)}^{r}\right)\\
		&\quad\quad+C\|\nabla u_0\|^{\frac{2}{2\alpha-1}}_{L^2(\mathbb{R}^3)}\exp\left(C\|u_0\|^{\frac{4}{3-2\alpha}}_{L^2(\mathbb{R}^3)}\right)\exp\left(C\|u\|_{L_T^{\frac{2}{1-\alpha},r}\dot{B}^{-\alpha}_{\infty,\infty}(\mathbb{R}^3)}^{r}\right)\|u\|	_{L_T^\infty{L}^{2}(\mathbb{R}^3)}+\frac{1}{2}\|u\|_{L_T^\infty\dot{H}^{1}(\mathbb{R}^3)}\\
		&\quad\leq C\|\nabla u_0\|_{L^2(\mathbb{R}^3)}
		\left(1+\|\nabla u_0\|^{\frac{3-2\alpha}{2\alpha-1}}_{L^2(\mathbb{R}^3)} \|u_0\|_{L^2} \right)\exp\left(C\|u_0\|^{\frac{4}{3-2\alpha}}_{L^2(\mathbb{R}^3)}\right)\exp\left(C\|u\|_{L_T^{\frac{2}{1-\alpha},r}\dot{B}^{-\alpha}_{\infty,\infty}(\mathbb{R}^3)}^{r}\right)+\frac{1}{2}\|u\|_{L_T^\infty\dot{H}^{1}(\mathbb{R}^3)}.
	\end{align*}
	Finally, absorbing $\frac{1}{2}\|u\|_{L_T^\infty\dot{H}^{1}(\mathbb{R}^3)}$ by left hand side, we complete the proof.
\end{proof}\\

\begin{proof}[Proof of Theorem  \ref{endpoint case}]
	To obtain our results, we firstly decompose $u$ into high frequency $u_h$ and low frequency $u_l$.  Thus,
	\begin{align*}
		u(t,x)=u_h(t,x)+u_l(t,x)
	\end{align*}
	where, 
	\begin{align*}
		u_l(t,x)=\sum_{k\leq j}\dot{\Delta}_ku(t,x),\quad u_h(t,x)=\sum_{k\geq j+1}\dot{\Delta}_ku(t,x),
	\end{align*}
	and $j=j(t)$ will be determined later.\\ 
	On account of 
	\begin{align*}
		u_1\in L_{T}^{\frac{2}{1-\alpha}}
		\dot{B}^{-\alpha}_{\infty,\infty}(\mathbb{R}^3),\quad
		\|u_2\|_{L_T^\infty\dot{B}^{-1}_{\infty,\infty}(\mathbb{R}^3)} \leq \epsilon,
	\end{align*}
	we have
	\begin{align}\label{frequency u}
		\|
		\dot{\Delta}_ku_1(t)\|_{L^\infty(\mathbb{R}^3)} \leq 2^{k\alpha} U(t),
		\quad 	\|\dot{\Delta}_ku_2(t)\|_{L^\infty(\mathbb{R}^3)} \leq 2^k \epsilon,\quad \forall k\in\mathbb{Z}
	\end{align}
	here  $\|U(t)\|_ {L^{\frac{2}{1-\alpha}}([0,T])}\sim\|u_1\|_{L^{\frac{2}{1-\alpha}}_T\dot{B}^{-\alpha}_{\infty,\infty}(\mathbb{R}^3)}$. \\
	Therefore we obtain that
	\begin{align*}
		\|u_h(t)\|_{\dot{B}^{-1}_{\infty,\infty}(\mathbb{R}^3)}=&\sup_{k\in\mathbb{Z}} 2^{-k}\|
		\dot{\Delta}_ku_h(t)\|_{L^\infty(\mathbb{R}^3)}
		\lesssim \sup_{k\geq j} 2^{-k}\|
		\dot{\Delta}_ku(t)\|_{L^\infty(\mathbb{R}^3)}\\
		\leq& \sup_{k\geq j} 2^{-k}\|
		\dot{\Delta}_ku_1(t)\|_{L^\infty(\mathbb{R}^3)}+\sup_{k\geq j} 2^{-k}\|
		\dot{\Delta}_ku_2(t)\|_{L^\infty(\mathbb{R}^3)}\\
		&\leq \sup_{k\geq j} 2^{-k(1-\alpha)}U(t)+\epsilon=2^{-j(1-\alpha)}U(t)+\epsilon.
	\end{align*}
	To treat the term $u_l$, we write
	\begin{align*}
		\|u_l\|_{\dot{B}^{-\alpha}_{\infty,\infty}(\mathbb{R}^3)}=&\sup_{k\in\mathbb{Z}} 2^{-k\alpha}\|
		\dot{\Delta}_ku_l(t)\|_{L^\infty(\mathbb{R}^3)}
		\lesssim \sup_{k\leq j+1} 2^{-k\alpha}\|
		\dot{\Delta}_ku(t)\|_{L^\infty(\mathbb{R}^3)}\\
		\leq& \sup_{k\leq j+1} 2^{-k\alpha}\|
		\dot{\Delta}_ku_1(t)\|_{L^\infty(\mathbb{R}^3)}+\sup_{k\leq j+1} 2^{-k\alpha}\|
		\dot{\Delta}_ku_2(t)\|_{L^\infty(\mathbb{R}^3)}\\
		&
		\leq U(t)+\epsilon \sup_{k\leq j+1} 2^{k(1-\alpha)}\lesssim U(t)+\epsilon  2^{j(1-\alpha)}.
	\end{align*}
	Taking $j=j(t)\in \mathbb{Z}$ such that $U(t)\sim\epsilon  2^{j(1-\alpha)}$, then we deduce
	\begin{align*}
		\|u_h\|_{\dot{B}^{-1}_{\infty,\infty}(\mathbb{R}^3)}\lesssim \epsilon,\quad
		\|u_l\|_{\dot{B}^{-\alpha}_{\infty,\infty}(\mathbb{R}^3)}\lesssim U(t),
	\end{align*}
	which implies
	\begin{align*}
		\|u_h\|_{L^\infty_T\dot{B}^{-1}_{\infty,\infty}(\mathbb{R}^3)}\lesssim \epsilon, \quad
		\|u_l\|_{L_{T}^{\frac{2}{1-\alpha}}\dot{B}^{-\alpha}_{\infty,\infty}(\mathbb{R}^3)}\lesssim \|u_1\|_{L_{T}^{\frac{2}{1-\alpha}}\dot{B}^{-\alpha}_{\infty,\infty}(\mathbb{R}^3)}.
	\end{align*}
	According to \eqref{NS ineq}, \eqref{NS ineq2} and Lemma \ref{1}, we have
	\begin{align*}
		\partial_t\|\nabla u(t)\|^2_{L^2(\mathbb{R}^3)}+\|\Delta u(t)\|^2_{L^2(\mathbb{R}^3)}&\leq C \|\nabla u_h\|_{L^3(\mathbb{R}^3)}^3+C \|\nabla u_l\|_{L^3(\mathbb{R}^3)}^3\\
		&\leq C\|u_h\|_{\dot{B}^{-1}_{\infty,\infty}(\mathbb{R}^3)}\|\nabla^2 u_h\|^2_{L^2(\mathbb{R}^3)}+\frac{1}{2}\|\nabla^2 u_l\|_{L^2(\mathbb{R}^3)}^2+C\|u_l\|_{\dot{B}^{-\alpha}_{\infty,\infty}(\mathbb{R}^3)}^{\frac{2}{1-\alpha}}\|\nabla u_l\|^{2}_{L^2(\mathbb{R}^3)}\\
		&\leq \left(C\epsilon_0+\frac{1}{2}\right)\|\nabla^2 u\|^2_{L^2(\mathbb{R}^3)}+C\|u_l\|_{\dot{B}^{-\alpha}_{\infty,\infty}(\mathbb{R}^3)}^{\frac{2}{1-\alpha}}\|\nabla u\|^{2}_{L^2(\mathbb{R}^3)}.
	\end{align*}
	Taking $\epsilon_0\leq\frac{1}{4C+1}$ to absorb the first term of right hand side, we get
	\begin{align*}
		\|\nabla u\|_{L^\infty_TL^2(\mathbb{R}^3)}+\|\nabla^2 u\|_{L^2_TL^2 (\mathbb{R}^3)}
		&\leq \|\nabla u_0\|_{L^2(\mathbb{R}^3)}\exp\left(C\int_{0}^{T}\|u_l(s)\|_{\dot{B}^{-\alpha}_{\infty,\infty}(\mathbb{R}^3)}^{\frac{2}{1-\alpha}}ds\right)\\
		&\leq \|\nabla u_0\|_{L^2(\mathbb{R}^3)}\exp\left(C\int_{0}^{T}\|u_1(s)\|_{\dot{B}^{-\alpha}_{\infty,\infty}(\mathbb{R}^3)}^{\frac{2}{1-\alpha}}ds\right).
	\end{align*}
\end{proof}

\section{Proof of Theorem \ref{SQGTH}, \ref{SQGTH 1} and \ref{SQGTH 2}}
First, we 	introduce the following a bilinear operator $\mathcal{B}(g_1,g_2)$,
\begin{align}\label{B}
	\mathcal{B}(g_1,g_2)=
	\left[\Delta\left(\nabla^\perp(-\Delta)^{-\frac{1}{2}} g_1\cdot\nabla g_2\right)-\nabla^\perp(-\Delta)^{-\frac{1}{2}} g_1\cdot\nabla \Delta g_2 \right],
\end{align}
to deal with the nonlinear term of SQG equation.
\begin{lemma}\label{2}
	For any $\alpha\in(0,1), \varepsilon\in [0,\frac{\alpha}{2})$, we have
	\small{	\begin{align}\label{g1g2g3}
			\left|\int_{\mathbb{R}^2}\mathcal{B}(g_1,g_2)\Delta g_3 \right|
			&\lesssim \left(\|g_1\|_{\dot{H}^{2+\frac{\alpha}{2}}(\mathbb{R}^2)}\|g_2\|_{\dot{C}^{1-\alpha}(\mathbb{R}^2)}+\|g_2\|_{\dot{H}^{2+\frac{\alpha}{2}-\epsilon}(\mathbb{R}^2)} \|g_1\|_{\dot{C}^{1-\alpha+\epsilon}(\mathbb{R}^2)}\right)\|g_3\|_{\dot{H}^{2+\frac{\alpha}{2}}(\mathbb{R}^2)}.
			\\\label{g1g2g3'}
			\left|\int_{\mathbb{R}^2}\mathcal{B}(g_1,g_2)\Delta g_3 \right|
			&\lesssim \left(\|g_2\|_{\dot{H}^{2+\frac{\alpha}{2}}(\mathbb{R}^2)}\|g_1\|_{\dot{C}^{1-\alpha}(\mathbb{R}^2)}
			+
			\|g_1\|_{\dot{H}^{2+\frac{\alpha}{2}-\epsilon}(\mathbb{R}^2)}\|g_2\|_{\dot{C}^{1-\alpha+\epsilon}(\mathbb{R}^2)}\right)
			\|g_3\|_{\dot{H}^{2+\frac{\alpha}{2}}(\mathbb{R}^2)}.
			\\\label{ggg3}
			\left|\int_{\mathbb{R}^2}\mathcal{B}(g,g)\Delta g_3 \right|
			&\lesssim \|g\|_{\dot{H}^2(\mathbb{R}^2)}^{\frac{2\epsilon}{\alpha}}\|g\|_{\dot{H}^{2+\frac{\alpha}{2}}(\mathbb{R}^2)}^{\frac{\alpha-2\epsilon}{\alpha}}\|g\|_{\dot{C}^{1-\alpha+\epsilon}(\mathbb{R}^2)}\|g_3\|_{\dot{H}^{2+\frac{\alpha}{2}}(\mathbb{R}^2)}.
			\\\label{ggg3'}
			\left|\int_{\mathbb{R}^2}\mathcal{B}(g,g)\Delta g_3 \right|
			&\lesssim
			\|g\|_{\dot{H}^{2+\frac{\alpha}{2}}(\mathbb{R}^2)}
			\|g\|_{\dot{C}^{1-\alpha}(\mathbb{R}^2)}
			\|g_3\|_{\dot{H}^{2+\frac{\alpha}{2}}(\mathbb{R}^2)}.
	\end{align}}
\end{lemma}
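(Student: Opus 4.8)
\textbf{Proof proposal for Lemma \ref{2}.}

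The plan is to estimate the bilinear operator $\mathcal{B}(g_1,g_2)$ using Littlewood--Paley decomposition and Bony's paraproduct. The key structural observation is that $\mathcal{B}$ is a commutator-type quantity: writing $v_1 = \nabla^\perp(-\Delta)^{-\frac 12}g_1$ (a zeroth-order, divergence-free transform of $g_1$), the expression $\Delta(v_1\cdot\nabla g_2) - v_1\cdot\nabla\Delta g_2$ consists only of terms in which at least one derivative from the outer Laplacian falls on $v_1$. Concretely, $\Delta(v_1\cdot\nabla g_2) = v_1\cdot\nabla\Delta g_2 + 2\sum_k \partial_k v_1\cdot\nabla\partial_k g_2 + \Delta v_1\cdot\nabla g_2$, so $\mathcal{B}(g_1,g_2) = 2\sum_k \partial_k v_1\cdot\nabla\partial_k g_2 + \Delta v_1\cdot\nabla g_2$. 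Thus $\mathcal{B}$ behaves like a product of two factors carrying two and one derivatives respectively (modulo Riesz transforms on $g_1$), with a total of three derivatives; since $\Delta g_3$ carries two more derivatives and we integrate, the total budget is five derivatives distributed over three copies, matching $\dot H^{2+\alpha/2} + \dot H^{2+\alpha/2} + \dot C^{1-\alpha}$ (note $(2+\frac\alpha2)+(2+\frac\alpha2)+(1-\alpha) = 5$) or the shifted variant with $\epsilon$.

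The main steps I would carry out: first, reduce $\mathcal{B}(g_1,g_2)$ to the two explicit terms above and note that Riesz transforms are bounded on every $\dot H^s$ and on $\dot C^s$ (homogeneous Hölder $=\dot B^s_{\infty,\infty}$ for $s\in(0,1)$), so $v_1$ can be treated as $g_1$ up to constants. Second, apply Bony's decomposition (Definition \ref{Bony}) to each product $\partial_k v_1 \cdot \nabla\partial_k g_2$: in the paraproduct piece $T_{lh}$ where $g_1$ sits at low frequency, put $g_1$ in $\dot C^{1-\alpha}$ and $g_2$ in $\dot H^{2+\alpha/2}$; in $T_{hl}$ and the remainder $R$ where $g_1$ sits at high (or comparable) frequency, put $g_1$ in $\dot C^{1-\alpha+\epsilon}$ — the extra $\epsilon$ regularity on $g_1$ is paid for by $\epsilon$ less on $g_2$, i.e. $\dot H^{2+\alpha/2-\epsilon}$; this is exactly the asymmetry displayed in \eqref{g1g2g3} versus \eqref{g1g2g3'}. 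Third, after pairing with $\Delta g_3$, estimate the resulting trilinear sums by Hölder in the physical variable and summation over dyadic blocks, using $\|\dot\Delta_j \Delta g_3\|_{L^2} \sim 2^{2j}\|\dot\Delta_j g_3\|_{L^2}$ and the embedding $\dot H^{2+\alpha/2-\epsilon} \hookrightarrow$ the appropriate Besov spaces; the $\dot C$-norm of the low-frequency factor controls the geometric sum in the frequency parameter. Finally, \eqref{ggg3} and \eqref{ggg3'} follow by setting $g_1=g_2=g$ in \eqref{g1g2g3'} and \eqref{g1g2g3} respectively, then interpolating $\|g\|_{\dot H^{2+\alpha/2-\epsilon}} \lesssim \|g\|_{\dot H^2}^{2\epsilon/\alpha}\|g\|_{\dot H^{2+\alpha/2}}^{(\alpha-2\epsilon)/\alpha}$ for \eqref{ggg3}.

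The main obstacle I anticipate is the careful bookkeeping of the paraproduct estimates so that the Hölder exponent used on the low-frequency factor is always strictly below $1$ (so that $\dot C^s = \dot B^s_{\infty,\infty}$ applies cleanly and the dyadic sums converge). This requires $1-\alpha \in (0,1)$, i.e. $\alpha\in(0,1)$, and $1-\alpha+\epsilon < 1$, i.e. $\epsilon < \alpha$ — consistent with the hypothesis $\epsilon\in[0,\alpha/2)$; the condition $\epsilon < \alpha/2$ is what is actually needed to keep $2+\frac\alpha2-\epsilon > 2$ so the interpolation in the last step is non-degenerate. A secondary technical point is handling the remainder term $R(\partial_k v_1, \nabla\partial_k g_2)$, where one must use that the sum $s_1+s_2$ of regularities is positive (Lemma following Lemma \ref{T_hl}) — here that sum is comfortably positive because both factors are high-regularity, so this is not a real difficulty, only a routine verification.
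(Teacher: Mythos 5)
Your overall strategy is the same as the paper's: isolate the commutator structure of $\mathcal{B}$ so that every term carries at least one derivative on $v_1=\nabla^\perp(-\Delta)^{-\frac12}g_1$, run Bony's decomposition on the resulting products, estimate the three pieces with the paraproduct lemmas, and interpolate $\dot{H}^{2+\frac{\alpha}{2}-\epsilon}$ between $\dot{H}^{2}$ and $\dot{H}^{2+\frac{\alpha}{2}}$ at the end. (The paper additionally rewrites $\mathcal{B}$ in divergence form and moves $\Delta^{-\frac{\alpha}{4}}$ onto $\mathcal{B}$ by duality, reducing to a product estimate in $\dot{B}^{1-\frac{\alpha}{2}}_{2,2}$; that is a cosmetic difference from your trilinear formulation.)

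However, your paraproduct bookkeeping contains a step that fails as written. For the piece $T_{hl}$, where the $g_1$-factor sits at high frequency and the $g_2$-factor at low frequency, you propose to measure $g_1$ in $\dot{C}^{1-\alpha+\epsilon}$ and $g_2$ in $\dot{H}^{2+\frac{\alpha}{2}-\epsilon}$. This is the wrong way around: in a paraproduct the low-frequency factor must be placed in an $L^\infty$-based space of negative order (this is precisely the hypothesis $s_0>0$ in Lemma \ref{T_hl}). If you instead put the low-frequency factor in $L^2$, the block at frequency $2^j$ requires controlling $\sum_{k\le j-2}2^{-kb}c_k$ with $b=\frac{\alpha}{2}-\epsilon>0$ and $(c_k)\in\ell^2$, and this sum is not $O(2^{-jb})$ — it diverges at low frequencies — so the dyadic summation cannot be closed. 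The correct assignment for $T_{hl}$ is high factor in $\dot{H}^{2+\frac{\alpha}{2}}$, low factor in $\dot{C}^{1-\alpha}$ with no $\epsilon$-shift; the $\epsilon$-shifted pair belongs only to $T_{lh}$ and $R$, which is exactly how the paper produces the two terms of \eqref{g1g2g3} (and, swapping roles, of \eqref{g1g2g3'}). A related consequence is that your derivation of \eqref{ggg3} by setting $g_1=g_2=g$ in \eqref{g1g2g3'} and interpolating does not close: the resulting bound still contains $\|g\|_{\dot{C}^{1-\alpha}}$, which is not controlled by $\|g\|_{\dot{C}^{1-\alpha+\epsilon}}$ (non-comparable homogeneous norms). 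To obtain \eqref{ggg3} one must re-run the decomposition for $g_1=g_2=g$ with the $\epsilon$-shifted indices on \emph{all three} pieces — legitimate by symmetry, since the high-frequency factor then always goes in $\dot{B}^{1+\frac{\alpha}{2}-\epsilon}_{2,2}$ and the low-frequency one in $\dot{B}^{\epsilon-\alpha}_{\infty,\infty}$, with $\alpha-\epsilon>0$ and $1-\frac{\alpha}{2}>0$ keeping the paraproduct and remainder lemmas applicable — and only then interpolate; this is what the paper does.
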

\begin{proof}
	We have
	\begin{align*}
		\left|\int_{\mathbb{R}^2}	\mathcal{B}(g_1,g_2)\Delta g_3 \right|
		= 
		\left|\int_{\mathbb{R}^2} \Delta^{-\frac{\alpha}{4}}\mathcal{B}(g_1,g_2) \cdot \Delta^{1+\frac{\alpha}{4}}g_3
		\right|
		\leq
		\|\Delta^{-\frac{\alpha}{4}}\mathcal{B}(g_1,g_2)\|_{L^2(\mathbb{R}^2)}\|g_3\|_{\dot{H}^{2+\frac{\alpha}{2}}(\mathbb{R}^2)}.
	\end{align*}
	For the term $\mathcal{B}(g_1,g_2)$, note that
	\begin{align*}
		\mathcal{B}(g_1,g_2)&=\Delta\left(\nabla^\perp(-\Delta)^{-\frac{1}{2}} g_1\cdot\nabla g_2\right)-\nabla^\perp(-\Delta)^{-\frac{1}{2}} g_1\cdot\nabla \Delta g_2\\
		&=\nabla g_2 \cdot \Delta\left(\nabla^\perp(-\Delta)^{-\frac{1}{2}} g_1\right)	+2\sum_{j=1}^{2}\partial_j \left(\nabla^\perp(-\Delta)^{-\frac{1}{2}} g_1\right)\cdot\partial_j \left(\nabla g_2\right)\\
		&=\sum_{j=1}^{2}\partial_j\left(\nabla g_2 \cdot \partial_j\left(\nabla^\perp(-\Delta)^{-\frac{1}{2}} g_1\right)\right)+\sum_{j=1}^{2}\partial_j \left(\nabla^\perp(-\Delta)^{-\frac{1}{2}} g_1\right)\cdot\partial_j \left(\nabla g_2\right)\\
		&=\nabla\cdot\left[\nabla g_2 \left(\nabla\left(\nabla^\perp(-\Delta)^{-\frac{1}{2}} g_1\right)\right)\right]^\top-\nabla^\perp\cdot\left[\left(\nabla(-\Delta)^{-\frac{1}{2}} \nabla g_1\right)(\nabla g_2)^{\top}\right],
	\end{align*}
	combining Definition \ref{Bony} we obtain that
	\begin{align*}
		\partial_{k}g_2(-\Delta)^{-\frac{1}{2}}\partial_{ij}g_1
		=T_{hl}((-\Delta)^{-\frac{1}{2}}\partial_{ij}g_1,\partial_{k}g_2)
		+
		T_{lh}((-\Delta)^{-\frac{1}{2}}\partial_{ij}g_1,\partial_{k}g_2)
		+
		R((-\Delta)^{-\frac{1}{2}}\partial_{ij}g_1,\partial_{k}g_2),
	\end{align*}
	\textbf{(1)} 
	Thus, to prove the first inequality, we have
	\begin{align*}
		\|\Delta^{-\frac{\alpha}{4}}\mathcal{B}(g_1,g_2)\|_{L^2(\mathbb{R}^2)}
		&\lesssim \sum_{i,j,k,l}\left\|\Delta^{-\frac{\alpha}{4}}\partial_l\left(\partial_{k}g_2(-\Delta)^{-\frac{1}{2}}\partial_{ij}g_1\right)\right\|_{L^2(\mathbb{R}^2)}
		\lesssim \sum_{i,j,k}\left\|\partial_{k}g_2(-\Delta)^{-\frac{1}{2}}\partial_{ij}g_1\right\|_{\dot{B}^{1-\frac{\alpha}{2}}_{2,2}(\mathbb{R}^2)}\\
		&\lesssim\sum_{i,j,k}
		\left(
		\|T_{hl}((-\Delta)^{-\frac{1}{2}}\partial_{ij}g_1,\partial_{k}g_2)\|_{\dot{B}^{1-\frac{\alpha}{2}}_{2,2}(\mathbb{R}^2)}
		+ 
		\|T_{lh}((-\Delta)^{-\frac{1}{2}}\partial_{ij}g_1,\partial_{k}g_2)\|_{\dot{B}^{1-\frac{\alpha}{2}}_{2,2}(\mathbb{R}^2)}\right.\\
		&\left.\qquad\qquad+
		\|R((-\Delta)^{-\frac{1}{2}}\partial_{ij}g_1,\partial_{k}g_2)\|_{\dot{B}^{1-\frac{\alpha}{2}}_{2,2}(\mathbb{R}^2)}
		\right).
	\end{align*}
	For the first term, using Lemma \ref{T_hl}, we have		
	\begin{align*}
		\|T_{hl}((-\Delta)^{-\frac{1}{2}}\partial_{ij}g_1,\partial_{k}g_2)\|_{\dot{B}^{1-\frac{\alpha}{2}}_{2,2}(\mathbb{R}^2)}
		&\leq \|(-\Delta)^{-\frac{1}{2}}\partial_{ij}g_1\|_{\dot{B}^{1+\frac{\alpha}{2}}_{2,2}(\mathbb{R}^2)}
		\|\partial_{k}g_2\|_{\dot{B}_{\infty,\infty}^{-\alpha}(\mathbb{R}^2)}\\
		&\leq \|g_1\|_{\dot{H}^{2+\frac{\alpha}{2}}(\mathbb{R}^2)} \|g_2\|_{\dot{C}^{1-\alpha}(\mathbb{R}^2)}.
	\end{align*}	
	For the rest of the term, it is clear that	
	\begin{align*}
		&\|T_{lh}((-\Delta)^{-\frac{1}{2}}\partial_{ij}g_1,\partial_{k}g_2)\|_{\dot{B}^{1-\frac{\alpha}{2}}_{2,2}(\mathbb{R}^2)}
		+
		\|R((-\Delta)^{-\frac{1}{2}}\partial_{ij}g_1,\partial_{k}g_2)\|_{\dot{B}^{1-\frac{\alpha}{2}}_{2,2}(\mathbb{R}^2)}\\
		&\quad\leq \|\partial_{k}g_2\|_{\dot{B}^{1+\frac{\alpha}{2}-\epsilon}_{2,2}(\mathbb{R}^2)}
		\|(-\Delta)^{-\frac{1}{2}}\partial_{ij}g_1\|_{\dot{B}_{\infty,\infty}^{-\alpha+\epsilon}(\mathbb{R}^2)}\\
		&\quad \leq \|g_2\|_{\dot{H}^{2+\frac{\alpha}{2}-\epsilon}(\mathbb{R}^2)} 
		\|g_1\|_{\dot{C}^{1-\alpha+\epsilon}(\mathbb{R}^2)}
	\end{align*}
	Therefore, it is easy to check 
	\begin{align*}
		\left|\int_{\mathbb{R}^2}\mathcal{B}(g_1,g_2)\Delta g_3 \right|\lesssim \left(\|g_1\|_{\dot{H}^{2+\frac{\alpha}{2}}(\mathbb{R}^2)}\|g_2\|_{\dot{C}^{1-\alpha}(\mathbb{R}^2)}+\|g_2\|_{\dot{H}^{2+\frac{\alpha}{2}-\epsilon}(\mathbb{R}^2)} \|g_1\|_{\dot{C}^{1-\alpha+\epsilon}(\mathbb{R}^2)}\right)\|g_3\|_{\dot{H}^{2+\frac{\alpha}{2}}(\mathbb{R}^2)},
	\end{align*}
	which completes the proof of \eqref{g1g2g3}.\\
	\textbf{(2)} To prove the second one, using Lemma \ref{T_hl}, we have	
	\begin{align*}
		\|T_{lh}((-\Delta)^{-\frac{1}{2}}\partial_{ij}g_1,\partial_{k}g_2)\|_{\dot{B}^{1-\frac{\alpha}{2}}_{2,2}(\mathbb{R}^2)}
		&\leq \|\partial_{k}g_2\|_{\dot{B}^{1+\frac{\alpha}{2}}_{2,2}(\mathbb{R}^2)}
		\|(-\Delta)^{-\frac{1}{2}}\partial_{ij}g_1\|_{\dot{B}_{\infty,\infty}^{-\alpha}(\mathbb{R}^2)}\\
		& \leq \|g_2\|_{\dot{H}^{2+\frac{\alpha}{2}}} 
		\|g_1\|_{\dot{C}^{1-\alpha}}
	\end{align*}
	and
	\begin{align*}
		&\|T_{hl}((-\Delta)^{-\frac{1}{2}}\partial_{ij}g_1,\partial_{k}g_2)\|_{\dot{B}^{1-\frac{\alpha}{2}}_{2,2}(\mathbb{R}^2)}
		+
		\|R((-\Delta)^{-\frac{1}{2}}\partial_{ij}g_1,\partial_{k}g_2)\|_{\dot{B}^{1-\frac{\alpha}{2}}_{2,2}(\mathbb{R}^2)}\\
		&\quad\leq \|(-\Delta)^{-\frac{1}{2}}\partial_{ij}g_1\|_{\dot{B}^{1+\frac{\alpha}{2}-\epsilon}_{2,2}(\mathbb{R}^2)}
		\|\partial_{k}g_2\|_{\dot{B}_{\infty,\infty}^{-\alpha+\epsilon}(\mathbb{R}^2)}\\
		&\quad\leq \|g_1\|_{\dot{H}^{2+\frac{\alpha}{2}-\epsilon}(\mathbb{R}^2)} \|g_2\|_{\dot{C}^{1-\alpha+\epsilon}(\mathbb{R}^2)}.
	\end{align*}
	Therefore, it is easy to show the statement \eqref{g1g2g3'}
	\begin{align*}
		\left|\int_{\mathbb{R}^2}\mathcal{B}(g_1,g_2)\Delta g_3 \right|
		\lesssim \left(\|g_2\|_{\dot{H}^{2+\frac{\alpha}{2}}(\mathbb{R}^2)}\|g_1\|_{\dot{C}^{1-\alpha}(\mathbb{R}^2)}
		+
		\|g_1\|_{\dot{H}^{2+\frac{\alpha}{2}-\epsilon}(\mathbb{R}^2)}\|g_2\|_{\dot{C}^{1-\alpha+\epsilon}(\mathbb{R}^2)}\right)
		\|g_3\|_{\dot{H}^{2+\frac{\alpha}{2}}(\mathbb{R}^2)}.
	\end{align*}
	\textbf{(3)}
	In particular, when $g_1=g_2=g$, we can obtain
	\begin{align*}
		&\|\Delta^{-\frac{\alpha}{4}}\mathcal{B}(g,g)\|_{L^2(\mathbb{R}^2)}\\
		&\quad\lesssim \sum_{i,j,k}\left(\left\|\partial_{k}g\right\|_{\dot{B}_{2,2}^{1+\frac{\alpha}{2}-\epsilon}(\mathbb{R}^2)}\|(-\Delta)^{-\frac{1}{2}}\partial_{ij}g\|_{\dot{B}_{\infty,\infty}^{\epsilon-\alpha}(\mathbb{R}^2)}
		+\left\|(-\Delta)^{-\frac{1}{2}}\partial_{ij}g\right\|_{\dot{B}_{2,2}^{1+\frac{\alpha}{2}-\epsilon}(\mathbb{R}^2)}\|\partial_{k}g\|_{\dot{B}_{\infty,\infty}^{\epsilon-\alpha}(\mathbb{R}^2)}\right)\\
		&\quad\lesssim \|g\|_{\dot{H}^{2+\frac{\alpha}{2}-\epsilon}(\mathbb{R}^2)}\|g\|_{\dot{C}^{1-\alpha+\epsilon}(\mathbb{R}^2)}\lesssim \|g\|_{\dot{H}^2(\mathbb{R}^2)}^{\frac{2\epsilon}{\alpha}}
		\|g\|_{\dot{H}^{2+\frac{\alpha}{2}}(\mathbb{R}^2)}^{\frac{\alpha-2\epsilon}{\alpha}}
		\|g\|_{\dot{C}^{1-\alpha+\epsilon}(\mathbb{R}^2)}.
	\end{align*}
	Therefore, we get the estimate in \eqref{ggg3}
	\begin{align*}
		\left|\int_{\mathbb{R}^2}\mathcal{B}(g,g)\Delta g_3 \right|\lesssim \|g\|_{\dot{H}^2(\mathbb{R}^2)}^{\frac{2\epsilon}{\alpha}}\|g\|_{\dot{H}^{2+\frac{\alpha}{2}}}^{\frac{\alpha-2\epsilon}{\alpha}}\|g\|_{\dot{C}^{1-\alpha+\epsilon}}\|g_3\|_{\dot{H}^{2+\frac{\alpha}{2}}}.
	\end{align*}
	\textbf{(4)} Moreover, when $\epsilon=0$, we have
	\begin{align*}
		\left|\int_{\mathbb{R}^2}\mathcal{B}(g,g)\Delta g_3 \right|
		\lesssim
		\|g\|_{\dot{H}^{2+\frac{\alpha}{2}}}
		\|g\|_{\dot{C}^{1-\alpha}}
		\|g_3\|_{\dot{H}^{2+\frac{\alpha}{2}}}.
	\end{align*}
\end{proof}

\begin{proof}[Proof of Theorem \ref{SQGTH}]
	Multiply $\Delta^2 \theta$ on \eqref{SQG} and integral over $dx$ we have
	\begin{align*}
		\frac{1}{2}\partial_t\|\theta\|_{\dot{H}^2}^2+\|\theta\|^2_{\dot{H}^{2+\frac{\alpha}{2}}}=-\int_{\mathbb{R}^2}	\mathcal{B}\Delta \theta dx-\int_{\mathbb{R}^2} \left(\nabla^\perp(-\Delta)^{-\frac{1}{2}} \theta\cdot\nabla \Delta \theta\right)\Delta \theta dx,
	\end{align*}
	here we recall   the definition of $\mathcal{B}$ in \eqref{B}.\\
	Note that $\nabla\cdot\nabla^\perp=0$, thus
	\begin{align*}
		\int_{\mathbb{R}^2} \nabla^\perp(-\Delta)^{-\frac{1}{2}} \theta\cdot(\nabla \Delta \theta)\Delta \theta dx=&-	\int_{\mathbb{R}^2} \nabla\cdot\left(\nabla^\perp(-\Delta)^{-\frac{1}{2}} \theta \right)\left(\Delta \theta\right)^2 dx-	\int_{\mathbb{R}^2} \Delta \theta\left(\nabla^\perp(-\Delta)^{-\frac{1}{2}} \theta \right)\cdot\nabla \Delta \theta dx\\
		=&-		\int_{\mathbb{R}^2} \left(\nabla^\perp(-\Delta)^{-\frac{1}{2}} \theta \right)\cdot(\nabla \Delta \theta)\Delta \theta dx.
	\end{align*}
	Hence we obtain that
	\begin{align*}
		\int_{\mathbb{R}^2} \nabla^\perp(-\Delta)^{-\frac{1}{2}} \theta\cdot(\nabla \Delta \theta)\Delta \theta dx=0.
	\end{align*}
	Applying Young's inequality and the result in Lemma \ref{2}, we have
	\begin{align*}
		\frac{1}{2}\partial_t\|\theta\|_{\dot{H}^2(\mathbb{R}^2)}^2+\|\theta\|^2_{\dot{H}^{2+\frac{\alpha}{2}}(\mathbb{R}^2)}&\leq	\left|\int_{\mathbb{R}^2}\left[\Delta\left(\nabla^\perp(-\Delta)^{-\frac{1}{2}} \theta\cdot\nabla \theta\right)-\nabla^\perp(-\Delta)^{-\frac{1}{2}} \theta\cdot\nabla \Delta \theta \right]\Delta \theta \right| \\
		&\leq C	\|\theta\|_{\dot{H}^2(\mathbb{R}^2)}^{\frac{2\epsilon}{\alpha}}\|\theta\|_{\dot{H}^{2+\frac{\alpha}{2}}(\mathbb{R}^2)}^{\frac{2\alpha-2\epsilon}{\alpha}}\|\theta\|_{\dot{C}^{1-\alpha+\varepsilon}(\mathbb{R}^2)}\\
		&\leq\frac{1}{2}
		\|\theta\|_{\dot{H}^{2+\frac{\alpha}{2}}(\mathbb{R}^2)}^2
		+
		C\|\theta\|_{\dot{H}^2(\mathbb{R}^2)}^2\|\theta\|_{\dot{C}^{1-\alpha+\varepsilon}(\mathbb{R}^2)}^{\frac{\alpha}{\epsilon}}.
	\end{align*}
	Absorbing  $\frac{1}{2}\|\theta\|_{\dot{H}^{2+\frac{\alpha}{2}}(\mathbb{R}^2)}^2$ by the left hand side and combining Gr\"{o}nwall inequality we obtain that 
	\begin{align*}
		\| \theta\|_{L^\infty_T\dot{H}^2(\mathbb{R}^2)}+\|\theta\|_{L^2_T \dot{H}^{2+\frac{\alpha}{2}}(\mathbb{R}^2)}\leq \| \theta_0\|_{\dot{H}^2(\mathbb{R}^2)}\exp\left(C\int_{0}^{T}\|\theta(s)\|_{\dot{C}^{1-\alpha+\epsilon}(\mathbb{R}^2)}^{\frac{\alpha}{\epsilon}}ds\right),
	\end{align*}
	where $C$ is a constant depending on $\alpha$ and $\varepsilon$.
\end{proof}

\begin{proof}[Proof of Theorem  \ref{SQGTH 1}]
	Combining \eqref{ll} and Lemma \ref{Ltqr}, we have
	\begin{align}\label{kkk}
		&\| \theta\|_{L^\infty_T\dot{H}^2(\mathbb{R}^2)}+\|\theta\|_{L^2_T \dot{H}^{2+\frac{\alpha}{2}}(\mathbb{R}^2)}\\\nonumber
		&\quad\leq C
		\|\theta_0\|_{\dot{H}^2(\mathbb{R}^2)} \exp\left(
		\log \left(
		1+\|\theta\|_{L_T^\infty \dot{C}^{1-\alpha+\epsilon}(\mathbb{R}^2)}
		\|\theta\|_{L_T^{\frac{4+\alpha}{2-\alpha+\epsilon}} \dot{C}^{1-\alpha+\epsilon}(\mathbb{R}^2)}^{\frac{(4+\alpha)\epsilon}{2\alpha-\alpha^2-4\epsilon}}
		\right)^{1-\frac{\alpha}{\epsilon r}} \|\theta\|_{L^{\frac{\alpha}{\epsilon},r} \dot{C}^{1-\alpha+\epsilon}(\mathbb{R}^2)}^{\frac{\alpha}{\epsilon}}
		\right).
	\end{align}
	Applying Young's inequality, we obtain
	\begin{align*}
		&	\| \theta\|_{L^\infty_T\dot{H}^2(\mathbb{R}^2)}+\|\theta\|_{L^2_T \dot{H}^{2+\frac{\alpha}{2}}(\mathbb{R}^2)}\\
		&\qquad\lesssim \|\theta_0\|_{\dot{H}^2(\mathbb{R}^2)} \left(1+\|\theta\|_{L_T^\infty \dot{C}^{1-\alpha+\epsilon}(\mathbb{R}^2)} \|\theta\|_{L_T^{\frac{4+\alpha}{2-\alpha+\epsilon}} \dot{C}^{1-\alpha+\epsilon}(\mathbb{R}^2)}^{\frac{(4+\alpha)\epsilon}{2\alpha-\alpha^2-4\epsilon}}\right)
		\exp\left(\|\theta\|_{L_T^{\frac{\alpha}{\epsilon},r}\dot{C}^{1-\alpha+\epsilon}(\mathbb{R}^2)}^{r}\right).
	\end{align*}
	Applying Gagliardo-Nirenberg interpolation, we have
	\begin{align*}
		\|\theta\|_{L_T^\infty \dot{C}^{1-\alpha+\epsilon}(\mathbb{R}^2)}
		&\leq \|\theta\|_{L_T^\infty L^2(\mathbb{R}^2)}^{\frac{\alpha-\epsilon}{2}} \|\theta\|_{L_T^\infty \dot{H}^2(\mathbb{R}^2)}^{\frac{2-\alpha+\epsilon}{2}},
		\\
		\|\theta\|_{L_T^{\frac{4+\alpha}{2-\alpha+\epsilon}} \dot{C}^{1-\alpha+\epsilon}(\mathbb{R}^2)}
		&\leq \|\theta\|_{L_T^\infty L^2(\mathbb{R}^2)}^{\frac{3\alpha-2\epsilon}{4+\alpha}} \|\theta\|_{L_T^2 \dot{H}^{2+\frac{\alpha}{2}}(\mathbb{R}^2)}^{\frac{4-2\alpha+2\epsilon}{4+\alpha}}.
	\end{align*}
	Combining with Young's inequality, the following inequality holds true
	\begin{align*}
		&\|\theta_0\|_{\dot{H}^2}
		\|\theta\|_{L_T^\infty \dot{C}^{1-\alpha+\epsilon}}
		\|\theta\|_{L_T^{\frac{4+\alpha}{2-\alpha+\epsilon}} \dot{C}^{1-\alpha+\epsilon}}^{\frac{(4+\alpha)\epsilon}{2\alpha-\alpha^2-4\epsilon}}
		\exp\left(\|\theta\|_{L_T^{\frac{\alpha}{\epsilon},r}\dot{C}^{1-\alpha+\epsilon}}^{r}\right)\\
		&\leq 
		\|\theta_0\|_{\dot{H}^2}
		\|\theta\|_{L_T^\infty L^2}^{\frac{\alpha(2\alpha-\alpha^2+\alpha\epsilon)}{2(2\alpha-\alpha^2-4\epsilon)}} \|\theta\|_{L_T^\infty \dot{H}^2}^{\frac{2-\alpha+\epsilon}{2}}
		\|\theta\|_{L_T^2 \dot{H}^{2+\frac{\alpha}{2}}}^{\frac{2\epsilon(2-\alpha+\epsilon)}{2\alpha-\alpha^2-4\epsilon}}
		\exp\left(\|\theta\|_{L_T^{\frac{\alpha}{\epsilon},r}\dot{C}^{1-\alpha+\epsilon}}^{r}\right)\\
		&\leq \frac{1}{2} \|\theta\|_{L_T^\infty \dot{H}^2}
		+ \frac{1}{2} \|\theta\|_{L_T^2 \dot{H}^{2+\frac{\alpha}{2}}}
		+ C\|\theta\|_{L_T^\infty L^2}^{1+\frac{2\epsilon(\alpha-4)}{(2\alpha^2-\alpha^3+\alpha^2\epsilon)-2\epsilon(\alpha-4)}}
		\left(\|\theta_0\|_{\dot{H}^2}
		\exp\left(\|\theta\|_{L_T^{\frac{\alpha}{\epsilon},r}\dot{C}^{1-\alpha+\epsilon}}^{r}\right)\right)^{\frac{2(2\alpha-\alpha^2-4\epsilon)}{2\alpha^2-\alpha^3-2\alpha\epsilon+\alpha ^2\epsilon-8\epsilon}},
	\end{align*}
	Multiply $ \theta$ on \eqref{SQG} and integral over $dx$ we have $L^2$ energy inequality
	\begin{align*}
		\|\theta(t,\cdot)\|_{ L^2(\mathbb{R}^2)}^2 +2\int_{0}^{t} \|\theta(s,\cdot)\|_{ \dot{H}^{\frac{\alpha}{2}}(\mathbb{R}^2)}^2ds
		\leq \|\theta_0\|_{L^2(\mathbb{R}^2)}^2.
	\end{align*}
	Thus we have
	\begin{align*}
		&\|\theta_0\|_{\dot{H}^2}
		\|\theta\|_{L_T^\infty \dot{C}^{1-\alpha+\epsilon}}
		\|\theta\|_{L_T^{\frac{4+\alpha}{2-\alpha+\epsilon}} \dot{C}^{1-\alpha+\epsilon}}^{\frac{(4+\alpha)\epsilon}{2\alpha-\alpha^2-4\epsilon}}
		\exp\left(\|\theta\|_{L_T^{\frac{\alpha}{\epsilon},r}\dot{C}^{1-\alpha+\epsilon}}^{r}\right)\\
		&\leq \frac{1}{2} \|\theta\|_{L_T^\infty \dot{H}^2}
		+ \frac{1}{2} \|\theta\|_{L_T^2 \dot{H}^{2+\frac{\alpha}{2}}}
		+C \|\theta_0\|_{ L^2}^{1+\frac{2\epsilon(\alpha-4)}{(2\alpha^2-\alpha^3+\alpha^2\epsilon)-2\epsilon(\alpha-4)}}
		\|\theta_0\|_{\dot{H}^2}^{\frac{2(2\alpha-\alpha^2-4\epsilon)}{2\alpha^2-\alpha^3-2\alpha\epsilon+\alpha ^2\epsilon-8\epsilon}}
		\exp\left(C\|\theta\|_{L_T^{\frac{\alpha}{\epsilon},r}\dot{C}^{1-\alpha+\epsilon}}^{r}\right).
	\end{align*}
	Finally, absorbing $\frac{1}{2} \|\theta\|_{L_T^\infty \dot{H}^2}+\frac{1}{2} \|\theta\|_{L^2_T \dot{H}^{2+\frac{\alpha}{2}}}
	$ by left hand side of \eqref{kkk}, we have
	\begin{align*}
		&\| \theta\|_{L^\infty_T\dot{H}^2(\mathbb{R}^2)}+\|\theta\|_{L^2_T \dot{H}^{2+\frac{\alpha}{2}}(\mathbb{R}^2)}\\
		&\quad\lesssim 
		\|\theta_0\|_{\dot{H}^2}
		\exp\left(\|\theta\|_{L_T^{\frac{\alpha}{\epsilon},r}\dot{C}^{1-\alpha+\epsilon}}^{r}\right)
		+C \|\theta_0\|_{ L^2}^{1+\frac{2\epsilon(\alpha-4)}{(2\alpha^2-\alpha^3+\alpha^2\epsilon)-2\epsilon(\alpha-4)}}
		\|\theta_0\|_{\dot{H}^2}^{\frac{2(2\alpha-\alpha^2-4\epsilon)}{2\alpha^2-\alpha^3-2\alpha\epsilon+\alpha ^2\epsilon-8\epsilon}}
		\exp\left(C\|\theta\|_{L_T^{\frac{\alpha}{\epsilon},r}\dot{C}^{1-\alpha+\epsilon}}^{r}\right)\\
		&\quad\lesssim 
		\|\theta_0\|_{\dot{H}^2} 
		\left(1+ \|\theta_0\|_{L^2}
		\left(1+\|\theta_0\|_{L^2}^{-\frac{8\epsilon}{\alpha^2}}\right)
		\left(1+\|\theta_0\|_{\dot{H^2}}^{\frac{4\alpha}{\alpha^2-8\epsilon}}\right)
		\right)
		\exp\left(C\|\theta\|_{L_T^{\frac{\alpha}{\epsilon},r}\dot{C}^{1-\alpha+\epsilon}}^{r}\right),
	\end{align*}
	which we complete the proof.
\end{proof}\vspace{0.2cm}\\

\begin{proof}[Proof of Theorem  \ref{SQGTH 2}]
	We firstly decompose $\theta$ into high frequency $\theta_h$ and low frequency $\theta_l$. Thus,
	\begin{align*}
		\theta(t,x)=\theta_h(t,x)+\theta_l(t,x)
	\end{align*}
	where, 
	\begin{align*}
		\theta_l(t,x)=
		\sum_{k\leq j}\dot{\Delta}_k\theta(t,x),\quad
		\theta_h(t,x)=
		\sum_{k\geq j+1}\dot{\Delta}_k\theta(t,x),
	\end{align*}
	and $j=j(t)$ will be determined later.\\ 
	On account of 
	\begin{align*}
		\theta_1\in L_{T}^{\frac{\alpha}{\epsilon}}
		\dot{C}^{1-\alpha+\epsilon},\quad
		\|\theta_2\|_{L^\infty_T\dot{C}^{1-\alpha}} \leq \delta,
	\end{align*}
	we have
	\begin{align}\label{frequency theta}
		\|
		\dot{\Delta}_j\theta_1\|_{L^\infty(\mathbb{R}^2)} \lesssim 2^{(\alpha-1-\epsilon)j} U(t),
		\quad 	\|\dot{\Delta}_j\theta_2\|_{L^\infty(\mathbb{R}^2)} \lesssim \delta 2^{(\alpha-1)j} ,
	\end{align}
	here  $\|U(t)\|_ {L_T^{\frac{\alpha}{\epsilon}}(\mathbb{R}^2)}\sim\|\theta_1\|_{L_T^{\frac{\alpha}{\epsilon}}\dot{C}^{1-\alpha+\epsilon}(\mathbb{R}^2)}$. \\
	Therefore we obtain that
	\begin{align*}
		\|\theta_l\|_{\dot{C}^{1-\alpha+\epsilon}(\mathbb{R}^2)}
		\leq& \sup_{k\leq j} 2^{k(1-\alpha+\epsilon)}\|
		\dot{\Delta}_k\theta_l\|_{L^\infty(\mathbb{R}^2)}\\
		\leq& \sup_{k\leq j} 2^{k(1-\alpha+\epsilon)}
		\left(\|
		\dot{\Delta}_k\theta_1(t)\|_{L^\infty(\mathbb{R}^2)}+\|
		\dot{\Delta}_k\theta_2(t)\|_{L^\infty(\mathbb{R}^2)}\right)\\
		\leq& U(t)+\delta2^{\epsilon j}.
	\end{align*}
	To treat the term $\theta_h$, we have
	\begin{align*}
		\|\theta_h\|_{\dot{C}^{1-\alpha}(\mathbb{R}^2)}
		\leq& \sup_{k\geq j} 2^{k(1-\alpha)}\|
		\dot{\Delta}_k\theta\|_{L^\infty(\mathbb{R}^2)}
		\leq \delta +2^{-\epsilon j}U(t).
	\end{align*}
	Taking $j=j_t$ such that $2^{-\epsilon j_t}U(t)\sim\delta  $, then we deduce
	\begin{align*}
		\|\theta_l\|_{L_T^{\frac{\alpha}{\epsilon}}\dot{C}^{1-\alpha+\epsilon}(\mathbb{R}^2)}\lesssim \|\theta_1\|_{L_T^{\frac{\alpha}{\epsilon}}\dot{C}^{1-\alpha+\epsilon}(\mathbb{R}^2)},\quad
		\|\theta_h\|_{L_T^{\infty}\dot{C}^{1-\alpha}(\mathbb{R}^2)}\lesssim \delta.
	\end{align*}
	Multiply $\Delta \theta$ on \eqref{SQG} and integral over $dx$ we have
	\begin{align*}
		\frac{1}{2}\partial_t\|\theta\|^2_{\dot{H}^2(\mathbb{R}^2)}&+\|\theta\|^2_{\dot{H}^{2+\frac{\alpha}{2}}(\mathbb{R}^2)}
		\leq \left|\int_{\mathbb{R}^2}	\mathcal{B}\Delta \theta \right|\\
		&
		\leq\left|\int_{\mathbb{R}^2} B(\theta_l,\theta_l)\Delta\theta\right|+\left|\int_{\mathbb{R}^2} B(\theta_h,\theta_h) \Delta\theta\right|
		+\left|\int_{\mathbb{R}^2} B(\theta_l,\theta_h) \Delta\theta\right|+\left|\int_{\mathbb{R}^2} B(\theta_h,\theta_l) \Delta\theta\right|.
	\end{align*}
	Note that
	using \eqref{ggg3}, we have
	\begin{align*}
		\left|\int_{\mathbb{R}^2}\mathcal{B}(\theta_l,\theta_l)\Delta \theta \right|\lesssim \|\theta\|_{\dot{H}^2(\mathbb{R}^2)}^{\frac{2\epsilon}{\alpha}}\|\theta\|_{\dot{H}^{2+\frac{\alpha}{2}}(\mathbb{R}^2)}^{\frac{2\alpha-2\epsilon}{\alpha}}\|\theta_l\|_{\dot{C}^{1-\alpha+\epsilon}(\mathbb{R}^2)}.
	\end{align*}
	On account of \eqref{ggg3'}, 
	\begin{align*}
		\left|\int_{\mathbb{R}^2}\mathcal{B}(\theta_h,\theta_h)\Delta \theta \right|
		&\lesssim
		\|\theta_h\|_{\dot{H}^{2+\frac{\alpha}{2}}(\mathbb{R}^2)}
		\|\theta_h\|_{\dot{C}^{1-\alpha}(\mathbb{R}^2)}
		\|\theta\|_{\dot{H}^{2+\frac{\alpha}{2}}(\mathbb{R}^2)}\\
		&\leq
		\|\theta_h\|_{\dot{C}^{1-\alpha}(\mathbb{R}^2)}
		\|\theta\|_{\dot{H}^{2+\frac{\alpha}{2}}(\mathbb{R}^2)}^2.
	\end{align*}
	Moreover, we can get directly by using \eqref{g1g2g3},
	\begin{align*}
		\left|\int_{\mathbb{R}^2}\mathcal{B}(\theta_l,\theta_h)\Delta \theta \right|\lesssim \left(\|\theta_l\|_{\dot{H}^{2+\frac{\alpha}{2}}(\mathbb{R}^2)}\|\theta_h\|_{\dot{C}^{1-\alpha}(\mathbb{R}^2)}+\|\theta_h\|_{\dot{H}^{2+\frac{\alpha}{2}-\epsilon}(\mathbb{R}^2)} \|\theta_l\|_{\dot{C}^{1-\alpha+\epsilon}(\mathbb{R}^2)}\right)\|\theta\|_{\dot{H}^{2+\frac{\alpha}{2}}(\mathbb{R}^2)}.
	\end{align*}	
	Similarly, since \eqref{g1g2g3'}, it's obvious 
	\begin{align*}
		\left|\int_{\mathbb{R}^2}\mathcal{B}(\theta_h,\theta_l)\Delta \theta \right|
		\lesssim \left(\|\theta_l\|_{\dot{H}^{2+\frac{\alpha}{2}}(\mathbb{R}^2)}\|\theta_h\|_{\dot{C}^{1-\alpha}(\mathbb{R}^2)}
		+
		\|\theta_h\|_{\dot{H}^{2+\frac{\alpha}{2}-\epsilon}(\mathbb{R}^2)}\|\theta_l\|_{\dot{C}^{1-\alpha+\epsilon}(\mathbb{R}^2)}\right)
		\|\theta\|_{\dot{H}^{2+\frac{\alpha}{2}}(\mathbb{R}^2)}.
	\end{align*}
	Thus, it's easy to check
	\begin{align*}
		\partial_t\|\theta\|^2_{\dot{H}^2(\mathbb{R}^2)}+\|\theta\|^2_{\dot{H}^{2+\frac{\alpha}{2}}(\mathbb{R}^2)}
		&\leq C 	\|\theta_l\|_{\dot{C}^{1-\alpha+\epsilon}(\mathbb{R}^2)}
		\|\theta\|^{\frac{2\alpha-2\epsilon}{\alpha}}_{\dot{H}^{2+\frac{\alpha}{2}}(\mathbb{R}^2)}\|\theta\|^{\frac{2\epsilon}{\alpha}}_{\dot{H}^{2}(\mathbb{R}^2)}
		+
		\|\theta_h\|_{\dot{C}^{1-\alpha}(\mathbb{R}^2)}
		\|\theta\|^{2}_{\dot{H}^{2+\frac{\alpha}{2}}(\mathbb{R}^2)}\\
		&\leq (C\delta+\frac{1}{2})\|\theta\|^2_{\dot{H}^{2+\frac{\alpha}{2}}(\mathbb{R}^2)}
		+
		C\|\theta\|^2_{\dot{H}^{2}(\mathbb{R}^2)}\|\theta\|_{C^{1-\alpha+\epsilon}(\mathbb{R}^2)}^{\frac{\alpha}{\epsilon}}.
	\end{align*}
	Taking $\delta \leq \delta_0=\frac{1}{4C+1}$, absorbing $(C\delta+\frac{1}{2})\|\theta\|^2_{\dot{H}^{2+\frac{\alpha}{2}}(\mathbb{R}^2)}$. By Gr\"{o}nwall inequality, we complete the proof
	{	\begin{align*}
			\|\theta\|_{L^\infty_T\dot{H}^2(\mathbb{R}^2)}+\|\theta\|_{L_T^2\dot{H}^{2+\frac{\alpha}{2}}(\mathbb{R}^2) }
			&\leq \|\theta_0\|_{\dot{H}^2(\mathbb{R}^2)}\exp\left(C\int_{0}^{T}\|\theta_1(s)\|_{C^{1-\alpha+\epsilon}(\mathbb{R}^2)}^{\frac{\alpha}{\epsilon}}dt\right).
	\end{align*}}
\end{proof}\vspace{0.2cm}\\
\textbf{Acknowledgements:} This research is funded by Vietnam National University Ho Chi Minh City (VNU-HCM) under grant number T2022-18-01. The authors are grateful to Professor Quoc Hung Nguyen, who introduced this project to us and patiently guided, supported, and encouraged us during this work.

\end{document}